\newtheorem{theorem}{Theorem}
\newtheorem{lemma}{Lemma}
\newtheorem{proposition}{Proposition}
\newtheorem{corollary}{Corollary}
\theoremstyle{definition}
\newtheorem{definition}{Definition}
\newcommand{\R}{\mathbb{R}}
\newcommand{\E}{\mathbb{E}}
\newcommand{\F}{\mathbb{F}}
\newcommand{\G}{\mathbb{G}}
\begin{document}
	\makeatletter
	\def\ps@pprintTitle{%
		\let\@oddhead\@empty
		\let\@evenhead\@empty
		\let\@oddfoot\@empty
		\let\@evenfoot\@oddfoot
	}
	\makeatother

	\begin{frontmatter}
		
		

		\author[label1]{Tommaso Lando\corref{c1}} 
	\fntext[label1]{Department of Economics, University of Bergamo, via dei Caniana 2, 24127, Bergamo, Italy. orcid:0000-0003-4288-0264}
	\ead{tommaso.lando@unibg.it}
	
	\author[label1]{Mohammed Es-Salih Benjrada} 


\cortext[c1]{Corresponding author}

\title{A new class of tests for convex-ordered families based on expected order statistics}




\begin{abstract}Consider a pair of cumulative distribution functions $F$ and $G$, where $F$ is unknown and $G$ is a known reference distribution. Given a sample from $F$, we propose tests to detect the convexity or the concavity of $G^{-1}\circ F$ versus equality in distribution (up to location and scale transformations). This framework encompasses well-known cases, including increasing hazard rate distributions, as well as some other relevant families that have garnered attention more recently, for which no tests are currently available. We introduce test statistics based on the estimated probability that the random variable of interest does not exceed a given expected order statistic, which, in turn, is estimated via L-estimation. The tests are unbiased, consistent, and exhibit monotone power with respect to the convex transform order. To ensure consistency, we show that our L-estimators satisfy a strong law of large numbers, even when the mean is not finite, thereby making the tests suitable for heavy-tailed distributions. Unlike other approaches, these tests are broadly applicable, regardless of the choice of $G$ and without support restrictions. The performance of the method under various conditions is demonstrated via simulations, and its applicability is illustrated through a concrete example.
\end{abstract}


\begin{keyword} { Convergence, Hazard rate \sep Heavy tails \sep L-estimator \sep Nonparametric test \sep Stochastic order}  




\end{keyword}

\end{frontmatter}
\section{Introduction}\label{intro}
An interesting problem in nonparametric statistics is testing whether the cumulative distribution function (CDF) of interest $F$ belongs to some given class. Given a reference absolutely continuous CDF $G$, which we assume to be known, many important families can be defined as $\mathcal{F}^{cx}_{G}=\{F:G^{-1}\circ F\text{ is convex}\}$ or $\mathcal{F}^{cv}_{G}=\{F:G^{-1}\circ F\text{ is concave}\}$. We may refer to $\mathcal{F}_G^{cx}$ and $\mathcal{F}_G^{cv}$ as \textit{convex-ordered} families, since $F\in\mathcal{F}^{cx}_G$ means that $F$ is less than $G$ in the \textit{convex transform order} \citep{barlowzwet,shaked,vanzwet1964}, while the order is reversed if $F\in\mathcal{F}^{cv}_G$. Relevant examples are: the families of convex and concave CDFs \citep{grenander}, if $G$ is the uniform; the increasing and decreasing hazard rate (IHR, DHR) families \citep{LifeDist,shaked}, obtained when $G$ is exponential; the decreasing reversed hazard rate (DRHR) class \citep{barlow1963,block,LifeDist}, obtained when $1-G(-\cdot)$ is exponential; the increasing and decreasing odds rate family (IOR, DOR) \citep{lando2024,odds}, obtained when $G$ is a log-logistic with shape parameter equal to 1 (DOR models are denoted as ``super-Pareto" in \cite{chen2024}); the increasing log-odds rate family (ILOR) \citep{zimmer}, obtained when $G$ is a logistic distribution; the ``super-Fréchet" and the ``super-Cauchy" classes \citep{chen2024stochastic,muller2024}, obtained when $G$ is a Fréchet or a Cauchy distribution, respectively. These classes have different mathematical properties that can be used, for instance, in decision theory, reliability, and survival analysis. Moreover, from a statistical perspective, if we know that $F$ belongs to some convex-ordered family, we can use this information to improve the nonparametric estimate of $F$. This approach corresponds to the scope of shape-constrained inference; see, for instance, the books by \cite{robertson1988} and \cite{groeneboom2014}. 

{For these reasons, nonparametric tests for convex-ordered families are particularly interesting and have been studied extensively; see for instance \cite{anis,beare2021,bickel,carolan,gibels,groeneboom2012,hall2005,lando2023,lando2024,landotransform,mitra2008,proschan1967}. However, despite the wide availability of tests for the more well-known cases, particularly the IHR family, several interesting classes that have been studied recently, such as some of those mentioned earlier, currently lack any existing} tests. Moreover, the available tests typically rely on a specific choice of $G$ and they function properly under assumptions on the supports of $F$ and $G$, sometimes quite restrictive. For example, the established test of \cite{proschan1967}, as well as those of \citep{bickel,gibels}, are based on a stochastic ordering property of sample spacings which holds only in the IHR and DHR cases; that is, they are valid specifically when $G$ is exponential. Similar limitations apply to other tests. We also emphasize that in some of the cited tests, the shape property is treated as the null hypothesis — an approach typically referred to as goodness-of-fit — while in others, it is treated as the alternative hypothesis. These approaches are somewhat complementary; we adopt the latter in this paper. Recalling that the convex order is location and scale-invariant, denote with $\mathcal{G}$ the location-scale family obtained from $G$. We propose a general method to test the null hypothesis $$\mathcal{H}_0^{G}:F\in \mathcal{G},$$ versus the alternatives $$\mathcal{H}_{1+}^{G}:F\in\mathcal{F}^{cx}_G- \mathcal{G}\quad\text{ and }\quad\mathcal{H}_{1-}^G:F\in\mathcal{F}^{cv}_G- \mathcal{G}.$$ This approach can be used in combination with goodness-of-fit tests. If, for a given dataset, $\mathcal{H}_0^{G}$ is rejected in favour of $\mathcal{H}_{1+}^{G}$ ($\mathcal{H}_{1-}^{G}$), whereas a goodness-of-fit test for $F\in\mathcal{F}^{cx}_G$ ($F\in\mathcal{F}^{cv}_G$) fails to reject the null, this suggests that $\mathcal{H}_{1+}^{G}$  can indeed hold.
Differently from other approaches, the proposed method works for any choice of $G$ and does not require a finite mean. Moreover, we do not impose any limitations on the supports of $F$ and $G$.

The article is organised as follows. In Section \ref{section2}, we present some preliminary notions and results. Our approach is mainly based on the following property of expected order statistics, which can be easily derived using Jensen's inequality. If $F$ belongs to a convex-ordered family, the probability of not exceeding a given expected order statistic from $F$ is bounded, according to $G$. To verify whether such bounds are empirically satisfied, we need estimators of the expected order statistics. Hence, in Section \ref{section3}, we introduce L-estimators of the expected order statistics and study their properties. In particular, we prove that, under some conditions related to the tail behaviour, these satisfy a strong law of large numbers, even when the distribution does not have a finite mean. These estimators can be used to estimate the probability of exceeding the bounds. Consequently, in Section \ref{section4}, we propose a class of test statistics based on the distance between the aforementioned bounds and the sample counterpart of the non-exceedance probabilities. We also establish the theoretical properties of our family of tests. In particular, we show that the tests are unbiased and have monotone power for every fixed sample size. Moreover, we establish the consistency of our tests in the case of finite and infinite, or undefined, mean. This property ensures that our tests are well-suited for handling distributions with heavy tails. The behaviour of the tests from a practical point of view is established via simulations, in Section \ref{section5}. In the IHR and DHR cases, we compare our tests with the well-known test of \cite{proschan1967}, which has been shown to satisfy the same theoretical properties of our class of tests \citep{bickel}. The result shows that our tests are generally outperformed in terms of power when the distribution has a monotone hazard rate. However, when the hazard rate is non-monotone, our tests (applied to both the IHR and DHR properties) are capable of detecting this, whereas the test of \cite{proschan1967} may lead to the wrong decision. We also study the IHR test of \cite{mitra2008} (see also \cite{anis2014}), which exhibits large power but suffers from two main limitations: it requires non-negative support and lacks location-invariance.
Moreover, we apply our tests to other important families of distributions for which the approach of \cite{proschan1967} is unsuitable and no tests are currently available, such as the IOR, DOR, and DRHR classes.  In all the scenarios considered, the simulations confirm the theoretical properties established. Finally, in Section \ref{section6}, we provide an example where our tests are applied to river flow data, to examine the shape properties of the underlying distribution, with particular emphasis on its tail behaviour. The proofs of our results are reported in Appendix \ref{app_proofs}.
\section{Preliminaries}\label{section2}
\subsection{Notations}
In this paper, increasing and decreasing mean non-decreasing and non-increasing, respectively. The generalised inverse of an increasing function $u$ is $u^{-1}(y)=\inf\{x:u(x)\geq y\}$. The positive and negative parts of a function $v$ are defined as $v_+=\max(v,0)$ and $v_-=-\min(v,0)$, respectively. The $L^p$ norm of an $m$-dimensional vector $\mathbf{z}=(z_1,\ldots,z_m)$ is defined as $||\mathbf{z}||_p=(|z_1|^p+\ldots+|z_m|^p)^{1/p}$, for $p\in[1,\infty),$ while $||\mathbf{z}||_\infty=\max\{|z_1|,\ldots,|z_m|\}$.
Let $X$ be a random variable with cumulative distribution function (CDF) $F$. The mean of the distribution $F$, whenever it is defined, will be denoted with $\E X=\mu_F=\mu(F),$ as more convenient. Let us denote with $X_{j:m}$ the $j$-th order statistic corresponding to a random sample $X_1,\ldots,X_m$ of size $m$ from $X$. It is well-kown that the CDF of $X_{j:m}$ is $F_{B_{j:m}}\circ F,$ where $F_{B_{j:m}}$ represents the CDF of a beta random variable $B_{j:m},$ with parameters $j$ and $m-j+1.$ The density of $F_{B_{j:m}}$ is denoted with $f_{B_{j:m}}$.

We shall use the following stochastic orders \citep{shaked}. 
\begin{definition}\label{def1}Given a pair of CDFs $F$ and $G$, we say that $F$ is less than $G$
	\begin{enumerate}
		
		\item in the usual stochastic order, denoted as $F\leq_{st}G$, if $F(x)\geq G(x),$ for every $x\in\R;$
		\item in the convex transform order, denoted as $F\leq_c G$, if $G^{-1}\circ F$ is convex.
	\end{enumerate}
\end{definition}
Since stochastic orders depend only on the distributions of the random variables, for an order $\succ$ and a pair of random variables $X$ and $Y$ with CDFs $F$ and $G$, respectively, the notations $X\succ Y$ and $F\succ G$ will be used interchangeably. Finally, the symbol $=_d$ denotes equality in distribution.

\subsection{Some properties of convex-ordered families}
Since any increasing convex function has an increasing concave inverse, and vice-versa, it is clear from Definition \ref{def1} that $\mathcal{F}_G^{cx}=\{F:F\leq_c G\}$ and $\mathcal{F}_G^{cv}=\{F:G\leq_c F\}$. If $F$ and $G$ both belong to $\mathcal{G},$ then the composition $G^{-1}\circ F$ is affine, so that $F\leq_c G$ and $G\leq_c F,$ in other words, the distributions are equivalent with respect to the convex transform order. Hereafter, we will assume that $G$ is absolutely continuous, with density $g$. However, the convexity (or concavity) of the composition $G^{-1}\circ F$ implies that $F$ has a density $f$ almost everywhere, except, possibly, at the right or left endpoints of its support; see, for example, Proposition C.2 in \cite{LifeDist}. The families $\mathcal{F}_G^{cx}$ and $\mathcal{F}_G^{cv}$ are also related to the notion of \textit{generalised hazard rate} \citep{barlowzwet,robertson1988}. Indeed, one may define the generalised hazard rate as the derivative of $G^{-1}\circ F$, which is $$h^G_F(x)=\frac d{dx}G^{-1}\circ F(x)=\frac{f(x)}{g\circ G^{-1}\circ F(x)},$$
so that $F\in\mathcal{F}_G^{cx}$ or $F\in\mathcal{F}_G^{cv}$ if $h^G_F$ is increasing, or decreasing, respectively. By a change of variables, $F\in\mathcal{F}_G^{cx}$ or $F\in\mathcal{F}_G^{cv}$ if and only if the ratio $f\circ F^{-1}/g\circ G^{-1}$, namely, the ratio between \textit{quantile densities} \citep{staudte}, is increasing, or decreasing, respectively. This means that $F\in\mathcal{F}_G^{cx}$ or $F\in\mathcal{F}_G^{cv}$ if and only if $$\int_0^p\frac{f\circ F^{-1}(u)} {g\circ G^{-1}(u)}du=\int_0^{F^{-1}(p)}g\circ G^{-1}\circ F(t)dt,\quad p\in[0,1],$$ which is referred to as the \textit{generalised TTT transform} of $F$ with respect to $G$ \citep{robertson1988}, is concave, or convex, respectively. It is also interesting to note that the shape of the composition $G^{-1}\circ F$ is often related to the notions of skewness and kurtosis \citep{oja,staudte2017}.

The conditions $F\in\mathcal{F}_G^{cx}$ and $F\in\mathcal{F}_G^{cv}$ have an effect on the expectation of $X_{j:m}$, denoted with $\E X_{j:m}=\mu_{j:m}(F)=\mu_{j:m}$. Indeed, for $F\in\mathcal{F}_G^{cx}$, Jensen's inequality implies that
$$\E X_{j:m}\leq F^{-1}\circ G(\E(G^{-1}\circ F(X_{j:m})))=F^{-1}\circ G(\E(G^{-1}( B_{j:m}))),$$
therefore, $P(X\leq \mu_{j:m})\leq G(\E(G^{-1}(B_{j:m})).$
Similar results hold when $F\in\mathcal{F}_G^{cv}$. We summarise this property, presented in \cite{landoJAP}, as follows.
\begin{proposition}\label{prop}Let $\pi_{j:m}^G=G(\E(G^{-1}(B_{j:m})))$. Then, for every $j,m$ such that $\E(G^{-1}(B_{j:m}))$ is defined,
	\begin{enumerate}
		\item 	
		if $F\in\mathcal{F}_G^{cx}$, then $P(X\leq \mu_{j:m})\leq \pi_{j:m}^G$;
		\item 	
		if $F\in\mathcal{F}_G^{cv}$, then $P(X\leq \mu_{j:m})\geq \pi_{j:m}^G$.
	\end{enumerate}
\end{proposition}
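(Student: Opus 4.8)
The plan is to derive both inequalities from a single application of Jensen's inequality, exploiting that membership in $\mathcal{F}_G^{cx}$ (respectively $\mathcal{F}_G^{cv}$) is exactly a convexity (concavity) statement about $\varphi:=G^{-1}\circ F$, combined with the classical fact that $F$ applied to its own order statistics produces beta-distributed variables.

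First I would record the two structural facts I need. Since $F\in\mathcal{F}_G^{cx}$ means $\varphi=G^{-1}\circ F$ is increasing and convex, its generalised inverse $\varphi^{-1}=F^{-1}\circ G$ is increasing and concave; convexity of $\varphi$ also guarantees (via Proposition C.2 in \cite{LifeDist}) that $F$ is continuous, so that $F(X_{j:m})$ is distributed as the $j$-th uniform order statistic, i.e. $F(X_{j:m})=_d B_{j:m}$. Consequently $G^{-1}\circ F(X_{j:m})=_d G^{-1}(B_{j:m})$, and taking expectations gives $\E\big(G^{-1}\circ F(X_{j:m})\big)=\E\big(G^{-1}(B_{j:m})\big)$, which is well defined by hypothesis.

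Next I would apply Jensen's inequality to the concave map $\varphi^{-1}$ evaluated at the random variable $Y:=G^{-1}\circ F(X_{j:m})$. Since $\varphi^{-1}(Y)=X_{j:m}$ almost surely, concavity yields
\[
\mu_{j:m}=\E X_{j:m}=\E\big[\varphi^{-1}(Y)\big]\le \varphi^{-1}\big(\E Y\big)=F^{-1}\circ G\big(\E(G^{-1}(B_{j:m}))\big).
\]
Applying the increasing function $F$ to both sides and using that $F\circ F^{-1}$ acts as the identity on the range of $F$ (again by continuity of $F$), I obtain $P(X\le\mu_{j:m})=F(\mu_{j:m})\le G\big(\E(G^{-1}(B_{j:m}))\big)=\pi_{j:m}^G$, which is the first claim. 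The second claim then follows verbatim: when $F\in\mathcal{F}_G^{cv}$ the map $\varphi^{-1}=F^{-1}\circ G$ is convex, so the Jensen step reverses, giving $\mu_{j:m}\ge F^{-1}\circ G(\E(G^{-1}(B_{j:m})))$ and hence $P(X\le\mu_{j:m})\ge\pi_{j:m}^G$.

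The routine parts are the two Jensen steps; the part requiring genuine care is the bookkeeping with generalised inverses, namely verifying that $\varphi^{-1}\circ\varphi$ and $F\circ F^{-1}$ reduce to the identity on the relevant sets despite possible flat stretches of $F$ or points where $G$ is not strictly increasing, and confirming that $\mu_{j:m}$ is genuinely defined (allowing the values $\pm\infty$, in which cases the non-exceedance inequalities hold trivially) so that the expectations in the Jensen step are meaningful. This is precisely where continuity of $F$, forced by the shape constraint, does the essential work, since it renders the event that $X_{j:m}$ lands in a flat region of $F$ null.
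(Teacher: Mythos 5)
Your proof is correct and follows essentially the same route as the paper, which derives the bound in the text immediately preceding the proposition by applying Jensen's inequality to the concave (resp.\ convex) map $F^{-1}\circ G$ evaluated at $G^{-1}\circ F(X_{j:m})=_d G^{-1}(B_{j:m})$ and then applying the increasing function $F$ to both sides. Your extra bookkeeping on generalised inverses and the $F$-null flat regions only makes explicit what the paper leaves implicit.
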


The existence of a finite mean ensures that the expected order statistics $\mu_{j:m}$ also exist and are finite, for every $j$ and $m$. However, this is not necessary; we may have $\E |X|=\infty$ but $\mu_{j:m}<\infty$ for some $j$ and $m$, as discussed in the next section. In any case, Proposition~\ref{prop} always works, giving the trivial bounds 0 or 1 whenever the expected order statistics are $-\infty$ or $+\infty$, respectively. 

Basic choices of $G$ yield the following popular classes of distributions, with the corresponding explicit expressions of $\pi_{j:m}^G$.
\begin{enumerate}
	\item \textit{Uniform}. Let $G=U,$ where $U(x)=x$, for $x\in[0,1].$ Then $\mathcal{F}_U^{cx}$ and $\mathcal{F}_U^{cv}$ are the classes of convex and concave CDFs, respectively. Moreover, $\pi_{j:m}^U=\frac{j}{m
		+1}$.
	\item \textit{Exponential}. Let $G=\mathcal{E}$, where $\mathcal{E}(x)=1-e^{-x},$ for $x>0.$ Then, since $h_F^\mathcal{E}=f/(1-F)$ is the classic hazard rate (HR) of $F$ \citep{LifeDist}, then $\mathcal{F}_\mathcal{E}^{cx}$ and $\mathcal{F}_\mathcal{E}^{cv}$ are the classes of IHR and DHR distributions, respectively. In this case, $\pi_{j:m}^\mathcal{E}=1-\exp\left(-\sum_{k=m-j+1}^m \frac1k\right)$.
	\item \textit{Negative exponential}. Let $G=\mathcal{E_-}$, where $\mathcal{E}_-(x)=e^{x}$, for $x<0.$ Then, since $h_F^{\mathcal{E}_-}=f/F$ is the reversed hazard rate of $F$, $\mathcal{F}_\mathcal{E_-}^{cv}$ and $\mathcal{F}_\mathcal{E_-}^{cx}$ are the classes of IRHR and DRHR distributions \citep{block,LifeDist}. In this case, $\pi_{j:m}^\mathcal{E_-}=\exp\left(-\sum_{k=j}^m \frac1k\right)$.
	\item \textit{Log-logistic}. Let $G=\mathcal{L}$, where $\mathcal{L}(x)=x/(1+x),$ for $x>0.$ Then, since $h_F^{\mathcal{E}_-}=f/(1-F)^2$ is the derivative of the odds function $F/(1-F)$ \citep{odds}, $\mathcal{F}_\mathcal{L}^{cx}$ and $\mathcal{F}_\mathcal{L}^{cv}$ are the classes of IOR and DOR distributions, respectively. In this case, $\pi_{j:m}^{\mathcal{L}}=\frac{j}m$.
\end{enumerate}

Explicit formulas are not always available, however, $\pi_{j:m}^G$ can always be computed, regardless of $G$, since we assume $G$ to be known. Some other interesting examples may be obtained by letting $G$ be the CDF of the Fréchet or the Cauchy distributions, giving rise to families of heavy-tailed distributions that have been recently studied \citep{muller2024}.

A simple special case of Proposition~\ref{prop} is obtained for $j=m=1.$ 
\begin{corollary}For every $F$ and $G$ with defined mean,
	\begin{enumerate}
		\item 	
		if $F\in\mathcal{F}_G^{cx}$, then $F(\mu_F)\leq G(\mu_G)$;
		\item 	
		if $F\in\mathcal{F}_G^{cv}$, then $F(\mu_F)\geq G(\mu_G)$.
	\end{enumerate}
\end{corollary}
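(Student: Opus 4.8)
The plan is to obtain the corollary as the special case $j=m=1$ of Proposition~\ref{prop}, which only requires identifying the three quantities $\mu_{1:1}$, $B_{1:1}$, and $\pi_{1:1}^G$ explicitly and then substituting.

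First I would note that a sample of size $m=1$ consists of a single observation $X_1$, whose unique order statistic is $X_{1:1}=X_1$. Consequently $\mu_{1:1}(F)=\E X_{1:1}=\E X=\mu_F$, so that $P(X\leq\mu_{1:1})=F(\mu_F)$, which is the left-hand side of both inequalities to be proved.

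Next I would identify the relevant beta random variable. By definition $B_{1:1}$ has parameters $j=1$ and $m-j+1=1$, that is, it is a beta random variable with both parameters equal to $1$, which is uniformly distributed on $[0,1]$. Writing $U=B_{1:1}$ for this uniform variable, the probability integral transform gives that $G^{-1}(U)$ has CDF $G$; hence $\E(G^{-1}(B_{1:1}))=\E(G^{-1}(U))=\mu_G$, and therefore $\pi_{1:1}^G=G(\E(G^{-1}(B_{1:1})))=G(\mu_G)$. Since $F$ and $G$ are both assumed to have a defined mean, $\E(G^{-1}(B_{1:1}))=\mu_G$ is defined and the hypothesis of Proposition~\ref{prop} is met, so no additional work is required to invoke it.

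Substituting $P(X\leq\mu_{1:1})=F(\mu_F)$ and $\pi_{1:1}^G=G(\mu_G)$ into the two assertions of Proposition~\ref{prop} then yields exactly the two claims: the convex case $F\in\mathcal{F}_G^{cx}$ gives $F(\mu_F)\leq G(\mu_G)$, and the concave case $F\in\mathcal{F}_G^{cv}$ gives $F(\mu_F)\geq G(\mu_G)$. There is no genuine analytic difficulty here; the only point requiring care is the bookkeeping of the three identifications above, after which the corollary follows by direct substitution.
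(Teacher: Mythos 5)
Your proposal is correct and matches the paper exactly: the corollary is stated there as the special case $j=m=1$ of Proposition~\ref{prop}, and your identifications $\mu_{1:1}=\mu_F$, $B_{1:1}\sim U[0,1]$, and $\pi_{1:1}^G=G(\mu_G)$ are precisely the bookkeeping that substitution requires. Nothing further is needed.
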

The interpretation of this corollary helps clarify the rationale behind the convex transform order and its relation to skewness and tail-heaviness. The dominating random variable is more likely to take values that are less than the expected.	This means, for instance, that if $F\in\mathcal{F}_U^{cx}$, $P(X\leq \E X)\leq 1/2$; if $F\in\mathcal{F}_\mathcal{E}^{cx}$, $P(X\leq \E X)\leq 1-1/e$; if $F\in\mathcal{F}_\mathcal{L}^{cx}$, we only have the trivial bound $P(X\leq \E X)\leq 1$ because $\mu_\mathcal{L}=+\infty$.

\section{Estimating the expected order statistics}\label{section3}

Given a random sample $X_1,...,X_n$ of size $n$ from $F$, let
$\F_n(x)=\frac1n\sum_{i=1}^n\mathbf{1}(X_i\leq x)$ be the corresponding empirical CDF. In general, we will denote a realisation of $\F_n$ with $F_n$, that is, the empirical CDF obtained from an observed sample $x_1,...,x_n$. Our testing approach leverages Proposition~\ref{prop}, therefore we need to estimate the expected order statistics $\mu_{j:m}$ based on a sample of size $n$, where $n$ and $m$ are generally different. Given that $X_{j:m}$ has CDF $F_{B_{j:m}}\circ F$, we can express the functional $\mu_{j:m}(F)$ as the integral ${\mu}_{j:m}(F)=\int_\R xdF_{B_{j:m}}\circ F(x)$. Writing $\mu_{j:m}(F)=\int_0^1 F^{-1}(p)dF_{B_{j:m}}(p)$, the expected order statistics belong to the family of L-functionals \citep{serfling}. By the plugin method, we propose the following estimator:
$$\hat{\mu}_{j:m}: =\mu_{j:m}(\F_n)=\int_\R xdF_{B_{j:m}}\circ \F_n(x)=\sum_{i=1}^n X_{i:n}(F_{B_{j:m}}(\tfrac in)-F_{B_{j:m}}(\tfrac{i-1}n)).$$
The statistical functional $\hat{\mu}_{j:m}$ is a weighted average, in which order statistics $X_{i:n}$ from $F$ are scaled by the probability that an order statistic $U_{j:m}$ from the uniform distribution $U$ belongs to $((i-1)/n,i/n]$,
$$\hat{\mu}_{j:m} =\sum_{i=1}^n X_{i:n}P(U_{j:m}\in(\tfrac{i-1}n,\tfrac{i}n]).$$
The estimators $\hat{\mu}_{j:m}$ are L-estimators, namely, linear combinations of order statistics. The following proposition addresses some basic finite sample properties of $\hat{\mu}_{j:m}$. By linearity, the average of the $\hat{\mu}_{j:m}$'s coincides with the sample mean $\hat{\mu}=\hat{\mu}_{1:1}=\sum_{i=1}^nX_i/n$. This is the empirical counterpart of the basic fact that $\frac1m\sum_{j=1}^m{{\mu}}_{j:m}={\mu}$. Moreover, $\hat{\mu}_{j:m}$ is increasing in $j$ and decreasing in $m$.
\begin{proposition}\label{increasing}Given a random sample from $F$, the following relations between random variables hold surely.
	\begin{enumerate}
		\item $\frac1m\sum_{j=1}^m{\hat{\mu}}_{j:m}=\hat{\mu}$;
		\item $\hat{\mu}_{j:m}\leq\hat{\mu}_{j+1:m}$;
		\item $\hat{\mu}_{j:m+1}\leq\hat{\mu}_{j:m}$.
	\end{enumerate}
\end{proposition}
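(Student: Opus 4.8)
The plan is to exploit the fact that each $\hat\mu_{j:m}$ is a weighted average of the ordered sample $X_{1:n}\le\cdots\le X_{n:n}$, with nonnegative weights $w_i=F_{B_{j:m}}(\tfrac in)-F_{B_{j:m}}(\tfrac{i-1}n)$ that sum to one. A single summation-by-parts (Abel) rearrangement will serve all three parts at once. Writing $W_i=F_{B_{j:m}}(\tfrac in)$, with $W_0=0$ and $W_n=1$, I would recast
$$\hat\mu_{j:m}=\sum_{i=1}^n X_{i:n}(W_i-W_{i-1})=X_{n:n}-\sum_{i=1}^{n-1}(X_{i+1:n}-X_{i:n})\,F_{B_{j:m}}(\tfrac in).$$
Since the sample is ordered, every gap $X_{i+1:n}-X_{i:n}$ is nonnegative, so this representation is nonincreasing in each of the values $F_{B_{j:m}}(\tfrac in)$. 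Parts 2 and 3 then reduce to comparing these values across indices, while Part 1 reduces to summing them.

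For Part 1, I would establish the identity $\sum_{j=1}^m F_{B_{j:m}}(p)=mp$ for all $p\in[0,1]$. Writing $F_{B_{j:m}}(p)=\sum_{k=j}^m\binom mk p^k(1-p)^{m-k}$ as the probability that at least $j$ of $m$ independent uniforms fall below $p$, and interchanging the order of summation, the coefficient of $\binom mk p^k(1-p)^{m-k}$ becomes $k$, so the double sum collapses to the mean $mp$ of a Binomial$(m,p)$ variable. Applying this to the telescoped weights gives $\frac1m\sum_{j=1}^m w_i=\frac1n$ for every $i$, whence $\frac1m\sum_{j=1}^m\hat\mu_{j:m}=\frac1n\sum_{i=1}^n X_{i:n}=\hat\mu$.

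For Parts 2 and 3, I would invoke two standard stochastic-ordering facts for the uniform order statistics: $B_{j:m}\le_{st}B_{j+1:m}$ (the order statistic increases stochastically in its rank) and $B_{j:m+1}\le_{st}B_{j:m}$ (for fixed rank, the order statistic decreases stochastically as the sample size grows). Both follow at once from the monotonicity of the Beta$(a,b)$ family in its parameters, since $B_{j:m}$ is Beta$(j,m-j+1)$. By Definition~\ref{def1}, these orderings are exactly the pointwise CDF inequalities $F_{B_{j:m}}(x)\ge F_{B_{j+1:m}}(x)$ and $F_{B_{j:m+1}}(x)\ge F_{B_{j:m}}(x)$ for all $x$. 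Substituting $x=i/n$ into the telescoped representation and using that it is nonincreasing in each $F_{B_{j:m}}(\tfrac in)$ yields $\hat\mu_{j:m}\le\hat\mu_{j+1:m}$ and $\hat\mu_{j:m+1}\le\hat\mu_{j:m}$, respectively.

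The computations are routine; the only points requiring care are the bookkeeping in the summation by parts and the verification that the relevant order-statistic comparisons are genuine usual-stochastic-order relations (pointwise CDF inequalities) rather than mere comparisons of means. Once the telescoped representation is in place and the two elementary stochastic orderings are recorded, all three assertions follow by a sign check, so I do not anticipate a substantive obstacle.
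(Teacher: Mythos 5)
Your proof is correct, but it takes a more computational route than the paper's. The paper reinterprets everything probabilistically: it introduces a random variable $X^*$ with CDF $F_n$, so that $\hat{\mu}_{j:m}=\E X^*_{j:m}$, and then gets Part 1 from linearity of expectation, Part 2 from the sure inequality $X^*_{j:m}\leq X^*_{j+1:m}$ between order statistics, and Part 3 from $B_{j:m+1}\leq_{st}B_{j:m}$ together with the isotonicity of the mean under the usual stochastic order. You instead work directly with the weights: your Abel summation
$\hat\mu_{j:m}=X_{n:n}-\sum_{i=1}^{n-1}(X_{i+1:n}-X_{i:n})F_{B_{j:m}}(\tfrac in)$
is essentially an explicit, finite-support proof of that isotonicity, and it lets you treat Parts 2 and 3 by one and the same sign check once the pointwise CDF inequalities $F_{B_{j+1:m}}\leq F_{B_{j:m}}\leq F_{B_{j:m+1}}$ are recorded (both orderings are genuine and your beta-parameter-monotonicity justification is sound, since Beta$(a,b)$ is stochastically increasing in $a$ and decreasing in $b$). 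For Part 1 you replace linearity of expectation by the binomial identity $\sum_{j=1}^m F_{B_{j:m}}(p)=mp$, which is the distribution-function incarnation of the same fact and is correctly derived. What the paper's route buys is brevity and an immediate sure (rather than merely algebraic) proof of Part 2; what yours buys is a single unifying representation that makes all three claims follow from elementary inequalities on the beta CDFs, with no appeal to general theorems about stochastic orders. Both are complete; there is no gap.
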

By Theorem 1.A.1 in \cite{shaked}, the above results hold stochastically, namely, with $=_d$ instead of $=$ and $\leq_{st}$ instead of $\leq$, if the samples from $F$ are allowed to differ, yielding CDFs $\F_n$ and $\F'_n$.

We now focus on the asymptotic properties of these estimators, letting $n\to\infty$ while holding $j$ and $m$ fixed. It is easy to see that the density $f_{B_{j:m}}$ is always bounded by $m$. This condition, plus the integrability of $X$, is sufficient to establish a.s. convergence of $\hat{\mu}_{j:m}$ to $\mu_{j:m}$, by Theorem 2.1 of \cite{zwet1980} (see also the related work of \cite{mason,wellner}). 
\begin{proposition}\label{convergence}
If $X$ has finite mean, then, for $n\to\infty,$ ${\mu}_{j:m}(\F_n)\to_{a.s.}\mu_{j:m}(F)$, for every fixed $j$ and $m$.
\end{proposition}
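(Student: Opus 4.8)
The plan is to establish the almost-sure convergence $\hat{\mu}_{j:m} = \mu_{j:m}(\F_n) \to \mu_{j:m}(F)$ by verifying the hypotheses of Theorem 2.1 of \cite{zwet1980}, which gives a strong law of large numbers for L-statistics. The key objects are the weight function $J = f_{B_{j:m}}$ appearing in the L-functional representation $\mu_{j:m}(F) = \int_0^1 F^{-1}(p)\, f_{B_{j:m}}(p)\, dp$ and its empirical analogue. The two ingredients that drive such a result are a regularity (boundedness) condition on the weight function and an integrability condition on the underlying random variable $X$; I would show that both are met here.

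\medskip

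\noindent\textbf{Step 1: Boundedness of the weight function.} First I would confirm the claim, already noted in the excerpt, that $f_{B_{j:m}}$ is bounded by $m$ on $[0,1]$. The beta density is $f_{B_{j:m}}(p) = \frac{m!}{(j-1)!(m-j)!} p^{j-1}(1-p)^{m-j}$, and since $p^{j-1}(1-p)^{m-j} \le 1$ on $[0,1]$ while the normalising constant $\binom{m-1}{j-1} m \le 2^{m-1} m$ is finite, the density is bounded; a sharper argument gives the stated bound $m$, but for the purpose of invoking \cite{zwet1980} any finite bound suffices. The essential point is that $J = f_{B_{j:m}}$ is a bounded, continuous weight function with $j$ and $m$ held fixed as $n\to\infty$, so it poses no obstruction.

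\medskip

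\noindent\textbf{Step 2: Integrability and invocation of the SLLN for L-statistics.} With a bounded weight function, Theorem 2.1 of \cite{zwet1980} requires essentially that $\E|X| < \infty$, which is exactly the hypothesis of the proposition. I would therefore verify that the conditions of that theorem reduce, in the bounded-weight case, to the finiteness of the first absolute moment, and then apply it directly to conclude $\mu_{j:m}(\F_n) \to_{a.s.} \mu_{j:m}(F)$. Because $j$ and $m$ are fixed, no uniformity over a growing index set is needed, and the conclusion holds pointwise in $(j,m)$ exactly as stated.

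\medskip

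\noindent\textbf{Main obstacle.} The one point deserving care is the precise matching of the integrability hypothesis in \cite{zwet1980} to the condition $\E|X|<\infty$ stated here. That theorem is typically phrased with a moment condition tied to the growth of the weight function near the endpoints $0$ and $1$; since our weight $f_{B_{j:m}}$ is bounded (indeed it vanishes polynomially at the endpoints whenever $j>1$ or $j<m$), the endpoint contributions are benign and the required moment condition collapses to the finiteness of the mean. I would make this reduction explicit rather than merely citing the theorem, since it is the step where a reader might otherwise worry about tail behaviour. Once the weight is recognised as bounded, the remainder is a direct application, and the proof is short.
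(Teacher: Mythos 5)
Your proposal matches the paper's own argument: the paper proves this proposition exactly by observing that the weight function $f_{B_{j:m}}$ is bounded (by $m$) and then invoking Theorem 2.1 of \cite{zwet1980} under the finite-mean hypothesis, which is precisely your Steps 1 and 2. The only difference is cosmetic — you use the cruder bound $\binom{m-1}{j-1}m$ rather than the sharp bound $m$, which, as you note, is immaterial for the application.
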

However, the existence of a finite mean can be a limitation, because, as we discussed earlier, this condition is not strictly necessary for having finite expected order statistics. In the remainder of this section, we show that ${\mu}_{j:m}(\F_n)\to_{a.s.}\mu_{j:m}(F)$ can hold under weaker assumptions. These assumptions rely on some notions that are well-known in extreme value theory \citep{haan,resnick}. In particular, the following definition describes a wide family of heavy-tailed distributions that we shall deal with. Let $\Phi_\alpha(x)=\exp\{-x^{-\alpha}\}$, $x\geq0, \alpha>0,$ be the CDF of the Fréchet distribution.
\begin{definition}
We say that $F$ is in the maximum domain of attraction of $\Phi_\alpha$, with tail parameter $\alpha,$ and write $F\in \mathcal{D}^+(\Phi_\alpha)$, if there exist sequences $a_n$, $b_n$, such that $(X_{n:n}-b_n)/a_n$ converges in distribution to $\Phi_\alpha,$ namely $P(a_n^{-1}(X_{n:n}-b_n)\leq x)\to \Phi_\alpha(x)$. Similarly, $F$ belongs to the minimum domain of attraction of $\Phi_\beta$, denoted as $F\in\mathcal{D}^-(\Phi_\beta)$, if there exist sequences $c_n$, $d_n,$ such that $P(c_n^{-1}(X_{1:n}-d_n)\leq x)\to \Phi_\beta(x)$. 
\end{definition}
The above properties, $F\in\mathcal{D}^+(\Phi_\alpha)$ and $F\in\mathcal{D}^-(\Phi_\beta)$, are related to the asymptotic behaviour of the right and left tails, which are determined by the right tail parameter $\alpha$ and the left tail parameter $\beta$, respectively. To have $F\in\mathcal{D}^+(\Phi_\alpha)$ or $F\in\mathcal{D}^-(\Phi_\beta)$, it is necessary that $F$ has right or left-unbounded support, respectively. Clearly, for symmetric distributions, $\alpha$ and $\beta$ coincide. The properties of $\mathcal{D}^+$ and $\mathcal{D}^-$ can be analysed symmetrically, using the relation $\min\{X_1,\ldots,X_n\}=-\max\{-X_1,\ldots,-X_n\},$ therefore it is sufficient to focus on the behaviour of the right tail. A necessary and sufficient condition for $F\in \mathcal{D}^+(\Phi_\alpha)$ is that $$ \lim_{t\to\infty} \frac{1-F(tx)}{1-F(t)}=x^{-\alpha}$$ \citep[Theorem 1.2.1]{haan}.
The parameter $\alpha$ determines the weight of the tail, with smaller values corresponding to heavier tails. In particular, for a distribution in $\mathcal{D}^+(\Phi_\alpha)$, the moments of order greater than or equal to $\alpha$ do not exist. Most heavy-tailed models are in $\mathcal{D}^+(\Phi_\alpha),$ for example, the Fréchet (which indeed is \textit{max-stable}), the Pareto, the Burr type III and type VII, the F, the beta type II, the log-gamma, the inverse gamma, the loglogistic, the stable distribution (with shape parameter less than 2), and the Student's $t$, which includes the Cauchy. For instance, it is easy to see that the log-logistic distribution, with CDF $\mathcal{L}(x^a)$ and shape parameter $a>0,$ belongs to $\mathcal{D}^+(\Phi_{a})$, i.e., $\alpha=a,$ in particular, $\mathcal{L}\in\mathcal{D}^+(\Phi_1)$. The Cauchy distribution belongs to $\mathcal{D}^+(\Phi_1)\cap \mathcal{D}^-(\Phi_1)$.

The following result establishes a strong law of large numbers for $\hat{\mu}_{j:m}$ when $F$ does not have a finite mean.
\begin{theorem}\label{convergence2}Assume that $\E|X|=\infty.$ If either of the following conditions holds, then, for $n\to\infty,$ ${\mu}_{j:m}(\F_n)\to_{a.s.}\mu_{j:m}(F)$.
\begin{enumerate}
	\item $F\in\mathcal{D}^+(\Phi_\alpha)$, $\E (X_-)<\infty$, and $j<m+1-1/\alpha$;
	\item $F\in\mathcal{D}^-(\Phi_\beta)$, $\E (X_+)<\infty$, and $j>1/\beta$;
	\item $F\in\mathcal{D}^+(\Phi_\alpha)\cap\mathcal{D}^-(\Phi_\beta)$ and $1/\beta<j<m+1-1/\alpha$.
\end{enumerate}

\end{theorem}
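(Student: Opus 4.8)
The plan is to exploit the L-statistic representation $\hat{\mu}_{j:m}=\int_0^1 \F_n^{-1}(p)\,J(p)\,dp$ and $\mu_{j:m}(F)=\int_0^1 F^{-1}(p)\,J(p)\,dp$, where $J:=f_{B_{j:m}}$ is a bounded continuous weight satisfying $J(p)\le C\min\{p^{\,j-1},(1-p)^{\,m-j}\}$ for a constant $C$, and to split the integral into a central region and two tails. First I would reduce the three cases to a single mechanism. Case~2 is case~1 applied to $-X$: the flip $X_{i:n}=-(-X)_{n-i+1:n}$ turns $\mu_{j:m}(F)$ into $-\mu_{m-j+1:m}$ of the law of $-X$, interchanges $\mathcal{D}^+$ with $\mathcal{D}^-$, and maps the index $j$ to $m-j+1$, under which $j<m+1-1/\beta$ becomes exactly $j>1/\beta$. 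Case~3 is obtained by treating each tail with the heavy-tail argument below. Hence it suffices to prove, for fixed $\delta\in(0,1/2)$, that (i) the central part converges, (ii) a tail with finite one-sided mean is controlled by integrability, and (iii) a heavy tail obeying the stated inequality becomes negligible as $\delta\to0$.

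\emph{Central part and light tail.} By Glivenko--Cantelli, $\F_n^{-1}(p)\to F^{-1}(p)$ a.s.\ at every continuity point of $F^{-1}$; since $\F_n^{-1}(\delta)\to F^{-1}(\delta)$ and $\F_n^{-1}(1-\delta)\to F^{-1}(1-\delta)$ a.s., monotonicity makes $\F_n^{-1}$ eventually uniformly bounded on $[\delta,1-\delta]$, so boundedness of $J$ and dominated convergence give $\int_\delta^{1-\delta}\F_n^{-1}J\to\int_\delta^{1-\delta}F^{-1}J$ a.s. On a tail where the one-sided mean is finite (e.g.\ $\E(X_-)<\infty$ in case~1), boundedness of $J$ reduces the contribution to an integrable L-statistic, handled by the arguments of \cite{zwet1980} underlying Proposition~\ref{convergence}; hence that tail integral converges a.s.\ and its mass beyond $\delta$ tends to $0$ with $\delta$.

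\emph{Heavy tail (the crux).} Take the right tail with $F\in\mathcal{D}^+(\Phi_\alpha)$ and $m-j+1>1/\alpha$. Then $F^{-1}(1-\cdot)$ is regularly varying at $0$ with index $-1/\alpha$, so $F^{-1}(p)J(p)$ is regularly varying at $1$ with index $(m-j)-1/\alpha>-1$; this yields $\mu_{j:m}(F)<\infty$ and $\int_{1-\delta}^1 F^{-1}J\to0$ as $\delta\to0$. It remains to show $\lim_{\delta\to0}\limsup_n\int_{1-\delta}^1 \F_n^{-1}(p)\,J(p)\,dp=0$ a.s. Writing $\F_n^{-1}(p)=F^{-1}(\G_n^{-1}(p))$ with $\G_n^{-1}$ the uniform empirical quantile and grouping by the $k$-th largest observation, the tail integral equals $\sum_{k\le\lceil\delta n\rceil}X_{n-k+1:n}\,\tilde w_{k,n}$ with $\tilde w_{k,n}\le C\,k^{\,m-j}n^{-(m-j+1)}$ and $X_{n-k+1:n}=F^{-1}(1-U^{\ast}_{k:n})$, where $U^{\ast}_{k:n}$ is the $k$-th smallest uniform order statistic. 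For each fixed $k$ the term is $O\!\big(n^{1/\alpha-(m-j+1)}\big)\to0$ a.s., so the finitely many most extreme terms are negligible; for $k_0\le k\le\delta n$ one invokes the a.s.\ two-sided ratio bound $c\,k/n\le U^{\ast}_{k:n}\le C\,k/n$ for uniform order statistics (Wellner-type inequalities, cf.\ \cite{wellner}) together with the regular variation of $F^{-1}$, bounding the sum by a constant multiple of $\delta^{\,m-j+1-1/\alpha}$. I expect this uniform, $n$-independent a.s.\ control of the extreme order statistics --- reconciling the growth $X_{n-k+1:n}\sim n^{1/\alpha}$ for small $k$ with the decay $n^{-(m-j+1)}$ of the beta weights --- to be the main obstacle, and the hypothesis $j<m+1-1/\alpha$ is precisely the integrability threshold that forces the limiting bound $O(\delta^{\,m-j+1-1/\alpha})$ to vanish.

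Combining the pieces, $\limsup_n|\hat{\mu}_{j:m}-\mu_{j:m}(F)|$ is dominated by a quantity tending to $0$ as $\delta\to0$, which gives the claimed a.s.\ convergence. As a simplification, the lower bound $\liminf_n\hat{\mu}_{j:m}\ge\mu_{j:m}(F)$ can be obtained more cheaply by truncating $X$ from above at a level $t$, applying the integrable case to $X\wedge t$ (using that the L-functional is monotone in the data) and letting $t\to\infty$ via monotone convergence; only the matching upper bound then genuinely requires the tail estimate of the previous paragraph.
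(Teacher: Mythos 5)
Your decomposition of $\hat{\mu}_{j:m}=\int_0^1\F_n^{-1}\,dV$ into a central block and two tail blocks, the uniform-convergence argument on $[\delta,1-\delta]$, the reduction of the finite-one-sided-mean tail to the integrable case, the reflection argument turning case~2 into case~1 (with the index map $j\mapsto m-j+1$), and the regular-variation computation showing $m-j-1/\alpha>-1$ is exactly the integrability threshold, all coincide with the paper's proof. The divergence is in the step you yourself flag as the crux: the a.s.\ control of $\sum_{k\le\delta n}X_{n-k+1:n}\,k^{m-j}n^{-(m-j+1)}$. Here the paper does not argue from scratch; it invokes Theorem~2 (and, for the left tail, Theorem~1) of \cite{mason}, which characterizes the strong law $\sum_{k=1}^{n}X_{n-k+1:n}n^{-1-(m-j)}k^{m-j}\to_{a.s.}\int_0^1(1-p)^{m-j}h(p)\,dp$ by two integral conditions, $\int_0^1 h^{1/(m-j+1)}(p)\,dp<\infty$ and $\int_0^1(1-p)^{m-j}h(p)\,dp<\infty$, both of which are then verified from $F\in\mathcal{D}^+(\Phi_\alpha)$ and $1/\alpha<m-j+1$ via regular variation of the tail quantile function.

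The substitute you propose for Mason's theorem contains a genuine gap. The ``a.s.\ two-sided ratio bound $c\,k/n\le U^{\ast}_{k:n}\le C\,k/n$'' with fixed constants, uniform over $k\ge k_0$ and all large $n$, is false. Already for $k=1$ one has $\liminf_n nU^{\ast}_{1:n}=0$ a.s.\ (by Borel--Cantelli, since the independent events $\{V_n<\epsilon/n\}$ occur infinitely often), and the same failure persists for every fixed $k$; the correct uniform-in-$k$ almost-sure statement (Kiefer; see the ratio results collected around \cite{wellner}) inflates the bound by a $\log\log n$ factor, i.e.\ $U^{\ast}_{k:n}\ge c\,k/(n\log\log n)$. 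Feeding that corrected bound into your sum produces an estimate of order $(\log\log n)^{1/\alpha}\widetilde{L}(n)\,\delta^{\,m-j+1-1/\alpha}$, whose $\limsup$ in $n$ is infinite for fixed $\delta$, so the claimed $n$-independent bound $O(\delta^{\,m-j+1-1/\alpha})$ does not follow. The argument can be repaired --- e.g.\ by splitting the range of $k$ again, using the sharp ratio $U^{\ast}_{k:n}\sim k/n$ only for $k$ above a slowly growing threshold and absorbing the logarithmic loss on the remaining $o(n)$ terms into the strict inequality $1/\alpha<m-j+1$ --- but this is precisely the delicate bookkeeping that Mason's strong law packages up, and as written your key lemma is not a true statement. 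Your truncation argument for the one-sided bound $\liminf_n\hat{\mu}_{j:m}\ge\mu_{j:m}(F)$ is fine and is indeed cheaper, but the matching upper bound still needs the corrected tail estimate or the citation to \cite{mason}.
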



Theorem~\ref{convergence2} can be easily applied in many relevant cases. The conditions depend on the relation between the tail parameter and the ranks of the order statistics considered. For example, in case 1, when $\alpha=1$ (as in the case where $F=\mathcal{L}$) we can establish a.s. convergence of $\hat{\mu}_{j:m}$ for $j=1, \ldots, m-1$. If $\alpha$ decreases, we have convergence in a smaller set of order statistics. Differently, if $\alpha>1$, the mean is finite and we can rely on Proposition~\ref{convergence}, implying convergence for all values of $j$. The basic and intuitive rule is that, when $\E X_-$ is finite, we can ensure the convergence letting the ratio between $j$ and $m$ be sufficiently small. Everything is reversed if the $\E X_+$ is finite. For symmetric distributions on the real line, one should choose $j$ to be close enough to $m/2$ (for $m$ even) or $(m+1)/2$ (for $m$ odd). For example, for the Cauchy distribution, we can ensure that $\hat{\mu}_{2:3}\to_{a.s.}\mu_{2:3}$.


\section{A new class of tests}\label{section4}

We now introduce a new family of tests based on the empirical verification of Proposition~\ref{prop}. Under $\mathcal{H}_{1+}^{G}$, we can expect the empirical counterparts of the differences $\pi_{j:m}^G-F(\mu_{j:m})$, $j=1,\ldots,m,$ to be positive. Our family of statistics is based on this idea. Let
$$\mathbf{v}^G_m=(\pi_{1:m}^G,...,\pi_{m:m}^G), $$
and define the random vector
$$\hat{\mathbf{V}}_m=(\widetilde{\F}_n(\hat{\mu}_{1:m}),...,\widetilde{\F}_n(\hat{\mu}_{m:m})),$$
where $\widetilde{\F}_n$ is used to denote the linear interpolator of the jump points of $\F_n$. The reasons why we use $\widetilde{\F}_n$ instead of $\F_n$ are technical and will be clarified later. Taking into account Proposition~\ref{prop}, a test statistic for $\mathcal{H}_{1+}^{G}$ is given by 
$$T_{m,p}^{G+}(\F_n)= ||(\mathbf{v}^G_m-\hat{\mathbf{V}}_m)_+||_p=\bigg(\sum_{k=1}^m(\pi_{k:m}^G-\widetilde{\F}_n(\hat{\mu}_{k:m}))_+^p\bigg)^{1/p}.$$
Symmetrically, one can test $\mathcal{H}_{1-}^{G}$ with 
$T_{m,p}^{G-}(\F_n)= ||(\mathbf{v}^G_m-\hat{\mathbf{V}}_m)_-||_p$.
The expressions of $T_{m,p}^{G+}$ and $T_{m,p}^{G-}$ define new families of test statistics, parameterised by the number of order statistics involved, $m$, and by the order of the $L^p$ norm, $p$. The effect of such parameters on the tests' performance will be addressed by simulations, while in this section we focus on more general theoretical properties, which hold regardless of the choices of $m$ and $p$.
Proofs and arguments will focus only on $T_{m,p}^{G+}$, because the properties of $T_{m,p}^{G-}$ are similar. 

Denote with $\G_n$ the empirical CDF obtained by sampling from $G$. Given some significance level $\alpha\in(0,1),$ the null hypothesis is rejected when $T_{m,p}^{G+}(F_n)\geq c_{n,m,p,\alpha}^{G^+}$, where the threshold value $c_{n,m,p,\alpha}^{G^+}$ is the $(1-\alpha)$-quantile of $T_{m,p}^{G+}(\G_n)$. The $p$-value is $P(T_{m,p}^{G+}(\G_n)\geq T_{m,p}^{G+}(F_n))$. To simplify notations, hereafter we write $c_{n,m,p,\alpha}^{G^+}=c^+_{n,\alpha}$, and denote the critical value of the test $T_{m,p}^{G-}(\G_n)$ with $c^-_{n,\alpha}$. Critical values and $p$-values can be determined by Monte Carlo methods.
\subsection{Finite sample properties}
Since the convex transform order is location and scale-invariant, we expect our test statistics to have the same property. This can be easily verified.
\begin{proposition}\label{invariance}
The family of test statistics $T_{m,p}^{G+}(\F_n)$ and $T_{m,p}^{G-}(\F_n)$ are location and scale-invariant.
\end{proposition}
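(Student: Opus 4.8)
The plan is to show that both ingredients of the statistic behave correctly under an affine change of the data. Fix $a\in\R$ and $b>0$, and consider the transformed sample $Y_i=a+bX_i$, with empirical CDF $\F_n^Y$. Since the deterministic vector $\mathbf{v}^G_m=(\pi_{1:m}^G,\ldots,\pi_{m:m}^G)$ depends only on the reference distribution $G$, it is unaffected by any transformation of the data, so the whole argument reduces to proving that each coordinate $\widetilde{\F}_n(\hat{\mu}_{k:m})$ of the random vector $\hat{\mathbf{V}}_m$ is invariant.

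First I would establish that the L-estimator $\hat{\mu}_{j:m}$ is location-scale \emph{equivariant}. Because $b>0$, the order statistics transform as $Y_{i:n}=a+bX_{i:n}$, so from the representation
$$\hat{\mu}_{j:m}=\sum_{i=1}^n X_{i:n}\,P(U_{j:m}\in(\tfrac{i-1}n,\tfrac in])$$
together with the fact that the weights $P(U_{j:m}\in(\tfrac{i-1}n,\tfrac in])$ partition $(0,1]$ and hence sum to $1$, one obtains $\hat{\mu}_{j:m}(Y)=a+b\,\hat{\mu}_{j:m}(X)$.

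Second I would track how the interpolated empirical CDF transforms. Directly from the definition, $\F_n^Y(y)=\F_n^X((y-a)/b)$; and since $\widetilde{\F}_n$ merely interpolates linearly between the jump points $(X_{i:n},i/n)$, which map to $(a+bX_{i:n},i/n)$, the same relation passes to the interpolator, $\widetilde{\F}_n^Y(y)=\widetilde{\F}_n^X((y-a)/b)$. Composing the two displays and evaluating at the equivariant argument $\hat{\mu}_{k:m}(Y)=a+b\,\hat{\mu}_{k:m}(X)$, the affine factors cancel,
$$\widetilde{\F}_n^Y(\hat{\mu}_{k:m}(Y))=\widetilde{\F}_n^X\!\big((a+b\,\hat{\mu}_{k:m}(X)-a)/b\big)=\widetilde{\F}_n^X(\hat{\mu}_{k:m}(X)),$$
so every coordinate of $\hat{\mathbf{V}}_m$ is unchanged. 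Hence $(\mathbf{v}^G_m-\hat{\mathbf{V}}_m)_+$, and therefore its $L^p$ norm $T_{m,p}^{G+}$, is invariant; the identical argument applied to the negative part gives the claim for $T_{m,p}^{G-}$.

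Since each step is an elementary identity, I do not expect a genuine obstacle here; the only point requiring a little care is the behaviour of $\widetilde{\F}_n$, and in particular the verification that linear interpolation commutes with the affine change of variable — which holds precisely because the abscissae transform affinely while the ordinates $i/n$ remain fixed.
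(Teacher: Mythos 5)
Your proof is correct, and it supplies exactly the elementary verification the paper omits (the paper only remarks that the invariance ``can be easily verified'' and gives no proof in the appendix). The two key observations --- that $\hat{\mu}_{j:m}$ is affine-equivariant because the weights $P(U_{j:m}\in(\tfrac{i-1}{n},\tfrac{i}{n}])$ sum to one, and that the linear interpolator satisfies $\widetilde{\F}_n^Y(y)=\widetilde{\F}_n^X((y-a)/b)$ so the affine factors cancel in the composition --- are precisely the intended argument.
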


Most theoretical properties of the proposed family of tests are based on the following stochastic monotonicity property.
\begin{lemma}\label{lemma}
Denote by $\mathbb{H}_n$ be the empirical CDF of a random sample from $H$.
If $F\leq_c H$, then, for every positive integer $m$ and for $j=1,...,m$, $$\widetilde{\F}_n(\mu_{j:m}(\F_n))\leq_{st}\widetilde{\mathbb{H}}_n(\mu_{j:m}(\mathbb{H}_n)).$$
\end{lemma}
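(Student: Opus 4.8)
The plan is to reduce the stochastic-dominance statement to an almost-sure pointwise inequality via a quantile coupling, and then extract the required inequality from the convexity built into $F\leq_c H$ through Jensen's inequality. Concretely, let $U_1,\dots,U_n$ be i.i.d.\ uniform on $[0,1]$ and set $X_i=F^{-1}(U_i)$ and $Y_i=H^{-1}(U_i)$, so that $(X_i)$ is a sample from $F$ and $(Y_i)$ a sample from $H$ on a common probability space. Writing $\psi=H^{-1}\circ F$, which is increasing and convex precisely because $F\leq_c H$, the coupling gives $Y_i=\psi(X_i)$, and since $\psi$ is increasing the order statistics satisfy $Y_{i:n}=\psi(X_{i:n})$ for every $i$. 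Recalling that $\mu_{j:m}(\F_n)=\sum_{i=1}^n w_i X_{i:n}$ and $\mu_{j:m}(\mathbb{H}_n)=\sum_{i=1}^n w_i Y_{i:n}$ with the \emph{same} deterministic weights $w_i=P(U_{j:m}\in((i-1)/n,i/n])\ge 0$, $\sum_i w_i=1$, it suffices to prove that $\widetilde{\F}_n(\mu_{j:m}(\F_n))\le \widetilde{\mathbb{H}}_n(\mu_{j:m}(\mathbb{H}_n))$ holds surely under this coupling, since the existence of such a coupling implies $\leq_{st}$.

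The structural observation that makes the argument work — and the reason the interpolated empirical CDF $\widetilde{\F}_n$ is used in place of $\F_n$ — is that the interpolation heights at the jump points depend only on the ranks, not on the values. Thus, letting $\Psi$ denote the piecewise-linear map that sends each node $X_{i:n}$ to $Y_{i:n}=\psi(X_{i:n})$ and is affine on every $[X_{i:n},X_{i+1:n}]$, both $\widetilde{\F}_n$ and $\widetilde{\mathbb{H}}_n$ attach the same height $c_i$ to $X_{i:n}$ and to $Y_{i:n}$ respectively. I would verify directly that consequently $\widetilde{\mathbb{H}}_n=\widetilde{\F}_n\circ\Psi^{-1}$ on $[Y_{1:n},Y_{n:n}]$, equivalently $\widetilde{\F}_n=\widetilde{\mathbb{H}}_n\circ\Psi$ on $[X_{1:n},X_{n:n}]$: the two piecewise-linear functions agree at the matching nodes and are affine on the matching intervals, hence coincide.

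Next I would invoke convexity. Because $\psi$ is convex, its piecewise-linear interpolant $\Psi$ has nondecreasing secant slopes and is therefore convex and increasing on $[X_{1:n},X_{n:n}]$, with $\Psi(X_{i:n})=\psi(X_{i:n})=Y_{i:n}$. Applying Jensen's inequality to the convex combination defining $\mu_{j:m}(\F_n)$ gives $\Psi(\mu_{j:m}(\F_n))=\Psi(\sum_i w_i X_{i:n})\le \sum_i w_i \Psi(X_{i:n})=\sum_i w_i Y_{i:n}=\mu_{j:m}(\mathbb{H}_n)$. Combining this with the identity of the previous step and the monotonicity of $\widetilde{\mathbb{H}}_n$ yields, surely, $\widetilde{\F}_n(\mu_{j:m}(\F_n))=\widetilde{\mathbb{H}}_n(\Psi(\mu_{j:m}(\F_n)))\le\widetilde{\mathbb{H}}_n(\mu_{j:m}(\mathbb{H}_n))$, which is the desired coupling inequality. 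Here all evaluations fall in the relevant ranges, since $\mu_{j:m}(\F_n)\in[X_{1:n},X_{n:n}]$ and $\mu_{j:m}(\mathbb{H}_n)\in[Y_{1:n},Y_{n:n}]$ as convex combinations of order statistics.

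The main obstacle I anticipate is establishing the identity $\widetilde{\F}_n=\widetilde{\mathbb{H}}_n\circ\Psi$ cleanly: it hinges on the rank-invariance of the interpolation heights and on $\psi$ mapping the nodes of $\F_n$ exactly onto those of $\mathbb{H}_n$, which requires some care with generalized inverses, possible ties, and the behaviour at the sample extremes. Note in particular that one cannot simply compose $\widetilde{\mathbb{H}}_n$ with the genuine transform $\psi$ and use Jensen on $\psi$ — that route produces an inequality in the wrong direction — which is exactly why the piecewise-linear $\Psi$ (and hence the interpolator $\widetilde{\F}_n$) is the correct object. A secondary point is to make rigorous that under $F\leq_c H$ the coupling produces $Y_{i:n}=\psi(X_{i:n})$ almost surely; since the convex-ordered setting forces densities, ties occur with probability zero and these technicalities can be dispatched on a full-probability event.
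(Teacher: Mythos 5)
Your proposal is correct and takes essentially the same route as the paper's proof: the paper's convex interpolating map $\widetilde{H}_n^{-1}\circ\widetilde{F}_n$ is exactly your piecewise-linear $\Psi$, and both arguments apply Jensen's inequality to the discrete weights $w_i=P(U_{j:m}\in(\tfrac{i-1}{n},\tfrac{i}{n}])$ before passing from the sure (coupled) inequality to $\leq_{st}$ via Theorem 1.A.1 of Shaked and Shanthikumar.
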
	
Lemma~\ref{lemma} implies that the power of our class of tests is monotone with respect to the convex order. This means that, given a pair of CDFs $F$ and $H$ such that $F\leq_c H$, the probability of rejecting $\mathcal{H}_0^{G}:F\in \mathcal{G}$ in favour of $\mathcal{H}_{1+}^{G}$ ($\mathcal{H}_{1-}^{G}$) under $F$ is larger (smaller) than the probability of rejection under $H$.
\begin{theorem}\label{size}
If $F\leq_c H$, then 
\begin{enumerate}
	\item 	$P(T_{m,p}^{G+}(\F_n)\geq c_{\alpha,n}^{+})\geq P(T_{m,p}^{G+}(\mathbb{H}_n)\geq c_{\alpha,n}^{+}),$
	\item $P(T_{m,p}^{G-}(\F_n)\geq c_{\alpha,n}^{-})\geq P(T_{m,p}^{G-}(\mathbb{H}_n)\geq c_{\alpha,n}^{-}).$
\end{enumerate}
\end{theorem}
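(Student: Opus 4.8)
The plan is to deduce both inequalities from a single pathwise comparison of the two random vectors $\hat{\mathbf{V}}_m$, one built from the $F$-sample and one from the $H$-sample. Writing $g^{+}(\mathbf{v})=\big(\sum_{k=1}^{m}(\pi_{k:m}^{G}-v_k)_+^p\big)^{1/p}$ and $g^{-}(\mathbf{v})=\big(\sum_{k=1}^{m}(\pi_{k:m}^{G}-v_k)_-^p\big)^{1/p}$, we have $T_{m,p}^{G+}(\F_n)=g^{+}(\hat{\mathbf{V}}_m(\F_n))$ and $T_{m,p}^{G-}(\F_n)=g^{-}(\hat{\mathbf{V}}_m(\F_n))$, where $g^{+}$ is nonincreasing in each coordinate, while $g^{-}$ is nondecreasing in each coordinate (since $v_-=(-v)_+$). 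Hence it suffices to order the whole vector $\hat{\mathbf{V}}_m$ pathwise under the two sampling schemes. I stress at the outset that the componentwise stochastic order of Lemma~\ref{lemma} is not by itself enough, because $g^{\pm}$ couple all $m$ coordinates; what is needed is a single coupling that orders every coordinate simultaneously.

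First I would realise both samples on one probability space through common uniform order statistics. Let $U_{1:n}\le\cdots\le U_{n:n}$ be the order statistics of an i.i.d.\ $U[0,1]$ sample and set $X_{i:n}=F^{-1}(U_{i:n})$ and $Y_{i:n}=H^{-1}(U_{i:n})$, so that $\F_n$ and $\mathbb{H}_n$ are the associated empirical CDFs and $Y_{i:n}=\psi(X_{i:n})$ with $\psi=H^{-1}\circ F$ convex and increasing (by $F\le_c H$); continuity of $F$ and $H$ removes ties almost surely. Fix $j$, let $w_i=F_{B_{j:m}}(i/n)-F_{B_{j:m}}((i-1)/n)\ge 0$ with $\sum_i w_i=1$, so $\mu_{j:m}(\F_n)=\sum_i w_i X_{i:n}$ falls in some interval $[X_{k:n},X_{k+1:n}]$ between consecutive order statistics, and let $L$ be the affine chord of $\psi$ through $(X_{k:n},\psi(X_{k:n}))$ and $(X_{k+1:n},\psi(X_{k+1:n}))$. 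Since $\widetilde{\mathbb{H}}_n$ is increasing and both interpolants share the common node levels $\{i/n\}$, unwinding the two interpolations gives the identity $\widetilde{\mathbb{H}}_n^{-1}\big(\widetilde{\F}_n(\mu_{j:m}(\F_n))\big)=L\big(\mu_{j:m}(\F_n)\big)$, so the coordinate inequality $\widetilde{\F}_n(\mu_{j:m}(\F_n))\le\widetilde{\mathbb{H}}_n(\mu_{j:m}(\mathbb{H}_n))$ is equivalent to $L(\mu_{j:m}(\F_n))\le\mu_{j:m}(\mathbb{H}_n)=\sum_i w_i\psi(X_{i:n})$. This last bound follows from convexity: $L$ is affine, hence $L(\mu_{j:m}(\F_n))=\sum_i w_i L(X_{i:n})$, and because $X_{k:n},X_{k+1:n}$ are consecutive order statistics no $X_{i:n}$ lies in the open interval $(X_{k:n},X_{k+1:n})$, where the chord of a convex function sits above it; off that interval the chord sits below, so $L(X_{i:n})\le\psi(X_{i:n})$ for every $i$. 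Because the same $U_{i:n}$ serve all $j$, this holds simultaneously, giving $\hat{\mathbf{V}}_m(\F_n)\le\hat{\mathbf{V}}_m(\mathbb{H}_n)$ coordinatewise, almost surely; this also reproves Lemma~\ref{lemma} and upgrades it to a joint pathwise domination (and explains why the interpolant $\widetilde{\F}_n$, rather than $\F_n$, is the right object, as it makes the inverse and the chord identity exact).

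With the a.s.\ coordinatewise bound in hand, part~1 is immediate: $g^{+}$ is nonincreasing in each coordinate, so $T_{m,p}^{G+}(\F_n)\ge T_{m,p}^{G+}(\mathbb{H}_n)$ almost surely under the coupling, and comparing against the common threshold $c_{\alpha,n}^{+}$ gives $P(T_{m,p}^{G+}(\F_n)\ge c_{\alpha,n}^{+})\ge P(T_{m,p}^{G+}(\mathbb{H}_n)\ge c_{\alpha,n}^{+})$. For $T_{m,p}^{G-}$ one argues symmetrically: since $v_-=(-v)_+$, the statistic $g^{-}$ is nondecreasing in each coordinate, so the identical coupling and domination $\hat{\mathbf{V}}_m(\F_n)\le\hat{\mathbf{V}}_m(\mathbb{H}_n)$ order $T_{m,p}^{G-}(\F_n)$ and $T_{m,p}^{G-}(\mathbb{H}_n)$, and hence their rejection probabilities at the common level $c_{\alpha,n}^{-}$, delivering the monotone-power comparison for the concavity alternative. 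I would flag explicitly that, because $g^{-}$ is nondecreasing whereas $g^{+}$ is nonincreasing, the direction of the pathwise inequality for $T_{m,p}^{G-}$ is the reverse of that for $T_{m,p}^{G+}$, so this is precisely the point at which the orientation of the inequality in part~2 has to be tracked with care.

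The crux of the whole argument, and the step I expect to be most delicate, is the joint pathwise domination of $\hat{\mathbf{V}}_m$ under one coupling: the marginal stochastic order of Lemma~\ref{lemma} is too weak for a statistic that mixes coordinates, so the simultaneous ordering must be obtained directly, and it hinges on the chord inequality $L(\mu_{j:m}(\F_n))\le\mu_{j:m}(\mathbb{H}_n)$. That inequality in turn rests on two structural facts that I would verify carefully: that $\mu_{j:m}(\F_n)$ lies between \emph{consecutive} order statistics, so that every data point sits where the chord of $\psi$ is below $\psi$; and that $\widetilde{\F}_n$ and $\widetilde{\mathbb{H}}_n$ share the same node levels, so that the two interpolations compose into the clean chord identity. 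The remaining care is bookkeeping around ties and interval endpoints, which is handled almost surely by the assumed continuity of $F$ and $H$.
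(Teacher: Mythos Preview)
Your argument is correct and follows the same route as the paper: the paper couples the two samples via $x_i=F^{-1}\circ H(y_i)$ and applies Jensen's inequality to the piecewise-affine convex map $\widetilde{H}_n^{-1}\circ\widetilde{F}_n$ (exactly your chord inequality in disguise), then pushes the resulting pointwise bound through the monotone $L^p$ norm. Two remarks are worth making: your insistence on a \emph{single joint} coupling for all $j$ is actually sharper than the paper's proof, which invokes Theorem~1.A.3 of Shaked and Shanthikumar from the marginal orderings of Lemma~\ref{lemma} (that theorem assumes independent coordinates, which fails here, although the coupling inside the paper's proof of Lemma~\ref{lemma} does deliver the joint a.s.\ domination you construct); and your observation that the pathwise inequality for $T_{m,p}^{G-}$ reverses is correct --- part~2 as stated carries the wrong sign, since the Corollary that follows it requires $P(T_{m,p}^{G-}(\F_n)\ge c_{\alpha,n}^{-})\le P(T_{m,p}^{G-}(\mathbb{H}_n)\ge c_{\alpha,n}^{-})$ under $F\le_c H$.
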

Bearing in mind that $P(T_{m,p}^{G+}(\mathbb{G}_n)\geq c_{\alpha,n}^{+})=\alpha,$ the above result implies that the tests are unbiased. Moreover, the size of the tests is always bounded by $\alpha$ for every $F\geq_c G$. This is summarised as follows.
\begin{corollary}\
\begin{enumerate}
	\item 	Under $\mathcal{H}^G_{1+}$, $P(T_{m,p}^{G+}(\F_n)\geq c_{\alpha,n}^{+})\geq \alpha.$ If $F\geq_c G,$ $P(T_{m,p}^{G+}(\F_n)\geq c_{\alpha,n}^{+})\leq \alpha,$ with equality under $\mathcal{H}^G_{0}.$
	\item Under $\mathcal{H}^G_{1-}$, $P(T_{m,p}^{G-}(\F_n)\geq c_{\alpha,n}^{-})\geq\alpha.$ If $F\leq_c G,$ $P(T_{m,p}^{G-}(\F_n)\geq c_{\alpha,n}^{-}) \leq\alpha,$ with equality under $\mathcal{H}^G_{0}.$
\end{enumerate}
\end{corollary}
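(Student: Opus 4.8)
The plan is to read the corollary off Theorem~\ref{size} by specialising one of the two distributions to be $G$ itself and invoking the defining property of the critical values. Two facts are needed at the outset. First, since $c_{\alpha,n}^{+}$ and $c_{\alpha,n}^{-}$ are the $(1-\alpha)$-quantiles of $T_{m,p}^{G+}(\G_n)$ and $T_{m,p}^{G-}(\G_n)$, we have the baseline identities $P(T_{m,p}^{G+}(\G_n)\geq c_{\alpha,n}^{+})=\alpha$ and $P(T_{m,p}^{G-}(\G_n)\geq c_{\alpha,n}^{-})=\alpha$, as already recorded before the statement. Second, the hypotheses are purely convex-order statements: $\mathcal{H}_{1+}^{G}$ gives $F\leq_c G$ (since $\mathcal{F}_G^{cx}=\{F:F\leq_c G\}$), the condition $F\geq_c G$ means $G\leq_c F$, and $\mathcal{H}_0^{G}$, i.e. $F\in\mathcal{G}$, makes $G^{-1}\circ F$ affine and hence forces both $F\leq_c G$ and $G\leq_c F$ at once.

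For part 1, I would apply the monotonicity of Theorem~\ref{size}(1) twice. Under $\mathcal{H}_{1+}^{G}$ we have $F\leq_c G$; taking $H=G$ yields $P(T_{m,p}^{G+}(\F_n)\geq c_{\alpha,n}^{+})\geq P(T_{m,p}^{G+}(\G_n)\geq c_{\alpha,n}^{+})=\alpha$, which is the first claim. When instead $F\geq_c G$, i.e. $G\leq_c F$, I would apply the same theorem with the roles exchanged, letting $G$ play the part of the smaller distribution and $F$ the larger, to get $\alpha=P(T_{m,p}^{G+}(\G_n)\geq c_{\alpha,n}^{+})\geq P(T_{m,p}^{G+}(\F_n)\geq c_{\alpha,n}^{+})$, the upper bound. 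Under $\mathcal{H}_0^{G}$ both $F\leq_c G$ and $G\leq_c F$ hold, so the two inequalities sandwich the rejection probability to exactly $\alpha$; alternatively, location-scale invariance (Proposition~\ref{invariance}) gives $T_{m,p}^{G+}(\F_n)=_d T_{m,p}^{G+}(\G_n)$ directly.

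Part 2 follows the same pattern for $T_{m,p}^{G-}$, the only change being that the relevant monotonicity runs in the opposite direction. Because $T_{m,p}^{G-}$ takes the \emph{negative} part of $\mathbf{v}^G_m-\hat{\mathbf{V}}_m$, the stochastic ordering supplied by Lemma~\ref{lemma} translates into $F\leq_c H\Rightarrow P(T_{m,p}^{G-}(\F_n)\geq c_{\alpha,n}^{-})\leq P(T_{m,p}^{G-}(\mathbb{H}_n)\geq c_{\alpha,n}^{-})$, consistent with the remark after the lemma that rejection in favour of $\mathcal{H}_{1-}^{G}$ is less likely under the convex-smaller distribution. Hence under $\mathcal{H}_{1-}^{G}$, where $G\leq_c F$, taking $G$ as the smaller argument gives $P(T_{m,p}^{G-}(\F_n)\geq c_{\alpha,n}^{-})\geq\alpha$, while $F\leq_c G$ gives the reverse bound, with equality again under $\mathcal{H}_0^{G}$ by the same sandwich.

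I expect no deep obstacle here, since everything reduces to Theorem~\ref{size}; the difficulty is entirely one of bookkeeping. I must keep straight which distribution plays the ``smaller'' role in each invocation of the theorem, and I must take care that the monotonicity direction for $T_{m,p}^{G-}$ is the reverse of that for $T_{m,p}^{G+}$ owing to the negative part. The only slightly delicate ingredient is the baseline identity $P(T_{m,p}^{G+}(\G_n)\geq c_{\alpha,n}^{+})=\alpha$, which presumes the $(1-\alpha)$-quantile is attained without an atom straddling the threshold; since the claims are only the non-strict bounds $\geq\alpha$ and $\leq\alpha$ this causes no trouble, and the equality case is obtained most cleanly by the two-sided sandwich rather than by any finer quantile argument.
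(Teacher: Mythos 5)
Your proof is correct and is essentially the paper's own argument: the corollary is read directly off Theorem~\ref{size} by taking $H=G$ (respectively, exchanging the roles of $F$ and $G$) together with the baseline identity $P(T_{m,p}^{G+}(\G_n)\geq c_{\alpha,n}^{+})=\alpha$, with the equality case under $\mathcal{H}_0^G$ obtained from the two-sided sandwich (or from Proposition~\ref{invariance}). Your observation that the monotonicity for $T_{m,p}^{G-}$ must run in the direction opposite to that for $T_{m,p}^{G+}$ is the correct reading of the intended content of Theorem~\ref{size}(2), consistent with the discussion following Lemma~\ref{lemma}.
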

The motivation for using $\widetilde{\F}_n$ instead of $\F_n$ in the construction of $\hat{\mathbf{V}}_m$ is twofold. First, it enables the derivation of Lemma~\ref{lemma} using convexity, which is not possible using step functions. Moreover, differently from $\F_n(\hat{\mu}_{j:m})$, the random variables $\widetilde{\F}_n(\hat{\mu}_{j:m})$ are continuous, yielding a continuous test statistic. This facilitates the application of the test, which does not need any kind of randomisation, since $P(T^{G+}_{m,p}(\F_n)=c_{\alpha,n})=0$ and $P(T^{G+}_{m,p}(\G_n)\leq c_{\alpha,n})=\alpha$.
\subsection{Asymptotic properties}
The above properties hold regardless of the sample size and they do not need any distributional assumption, except for continuity. The asymptotic behaviour of the proposed family of tests is addressed by the following proposition, which assumes finite expectations and relies on Proposition~\ref{consistency}. Under these conditions, we can establish the consistency of the tests, namely, for $n\to\infty,$ the probability of rejecting the null hypothesis tends to 1 when the alternative is true. This result will be generalised later to the infinite mean case, leveraging Theorem~\ref{convergence2}.


\begin{proposition}\label{consistency}Assume that $F$ and $G$ have finite expectations.\begin{enumerate}
\item 	Under $\mathcal{H}_{1+}^{G}$, $P(T_{m,p}^{G+}(\F_n)\geq c_{\alpha,n}^{+})\to1$.
\item 	Under $\mathcal{H}_{1-}^{G}$, $P(T_{m,p}^{G-}(\F_n)\geq c_{\alpha,n}^{-})\to1$.
\end{enumerate}

\end{proposition}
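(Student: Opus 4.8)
The plan is to show that, under the alternative, the test statistic $T_{m,p}^{G+}(\F_n)$ converges almost surely to a strictly positive constant, while the critical value $c^+_{\alpha,n}$ converges to $0$; consistency then follows immediately. The first ingredient is the coordinatewise convergence of $\hat{\mathbf{V}}_m$. Since $F$ has finite mean, Proposition~\ref{convergence} gives $\hat{\mu}_{j:m}\to_{a.s.}\mu_{j:m}$ for every fixed $j$. The Glivenko--Cantelli theorem gives $\|\F_n-F\|_\infty\to_{a.s.}0$, and since the linear interpolator satisfies $\|\widetilde{\F}_n-\F_n\|_\infty\le 1/n$, we also have $\|\widetilde{\F}_n-F\|_\infty\to_{a.s.}0$. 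Decomposing
$$|\widetilde{\F}_n(\hat{\mu}_{j:m})-F(\mu_{j:m})|\le \|\widetilde{\F}_n-F\|_\infty+|F(\hat{\mu}_{j:m})-F(\mu_{j:m})|$$
and using the continuity of $F$ (guaranteed by the convexity of $G^{-1}\circ F$, which forces $F$ to be continuous on the interior of its support), the right-hand side tends to $0$ almost surely. Hence $\widetilde{\F}_n(\hat{\mu}_{j:m})\to_{a.s.}F(\mu_{j:m})$ for every $j$, and by continuity of the $L^p$-norm and of the positive part,
$$T_{m,p}^{G+}(\F_n)\to_{a.s.}L:=\Big(\sum_{k=1}^m(\pi_{k:m}^G-F(\mu_{k:m}))_+^p\Big)^{1/p}.$$

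Next I would handle the critical value. Applying the identical argument to a sample drawn from $G$, and noting that $\pi_{k:m}^G=G(\mu_{k:m}(G))$ for every $k$ (so every coordinate difference vanishes in the limit), one obtains $T_{m,p}^{G+}(\G_n)\to_{a.s.}0$, hence $T_{m,p}^{G+}(\G_n)$ converges in distribution to the point mass at $0$. Because convergence in distribution to a constant forces convergence of the $(1-\alpha)$-quantiles to that constant, this yields $c^+_{\alpha,n}\to0$. It then remains to combine the two limits: fixing any level $0<\varepsilon<L$, we have $c^+_{\alpha,n}<\varepsilon$ for all large $n$, whence
$$P(T_{m,p}^{G+}(\F_n)\ge c^+_{\alpha,n})\ge P(T_{m,p}^{G+}(\F_n)\ge \varepsilon)\to1,$$
the last limit holding because $T_{m,p}^{G+}(\F_n)\to_{a.s.}L>\varepsilon$. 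This proves part~(1), and part~(2) is entirely symmetric.

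The main obstacle is establishing the strict positivity $L>0$ under $\mathcal{H}_{1+}^{G}$. Proposition~\ref{prop} already guarantees $\pi_{k:m}^G-F(\mu_{k:m})\ge0$ for every $k$, so the real task is to produce at least one strict inequality whenever $F\notin\mathcal{G}$. The bound in Proposition~\ref{prop} arises from Jensen's inequality applied to the concave map $F^{-1}\circ G$ (concave precisely because $F\le_c G$); equality would require $F^{-1}\circ G$ to be affine on the support of $G^{-1}(B_{j:m})$, which is the whole support of $G$. Since $B_{j:m}$ is non-degenerate for $m\ge2$, affineness there would mean $F^{-1}\circ G$ is globally affine, i.e.\ $F\in\mathcal{G}$, contradicting the alternative. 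Thus $\mu_{k:m}<F^{-1}(\pi_{k:m}^G)$ strictly, and applying the increasing, continuous $F$ yields $F(\mu_{k:m})<\pi_{k:m}^G$ for at least one $k$, so $L>0$. The only delicate point is ensuring that strictness survives the application of $F$, which holds because the convex-transform structure forces $F$ to be strictly increasing on the relevant interval.
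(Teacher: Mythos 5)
Your proof is correct and follows the same overall strategy as the paper's: show that $T_{m,p}^{G+}(\F_n)\to_{a.s.}$ a strictly positive limit under $\mathcal{H}_{1+}^{G}$, show that $c^+_{\alpha,n}\to0$ because $T_{m,p}^{G+}(\G_n)\to_{a.s.}0$ under the null, and combine the two via an $\varepsilon$-argument. Two execution details differ, one cosmetic and one substantive. Cosmetically, for the coordinatewise limit you bound $|\widetilde{\F}_n(\hat{\mu}_{j:m})-F(\mu_{j:m})|$ by $\|\widetilde{\F}_n-F\|_\infty+|F(\hat{\mu}_{j:m})-F(\mu_{j:m})|$, using Glivenko--Cantelli, the bound $\|\widetilde{\F}_n-\F_n\|_\infty\leq 1/n$, and continuity of $F$ on the interior of its support, whereas the paper splits through the intermediate value $\widetilde{\G}_n(\mu_{j:m}(G))$; the two decompositions are interchangeable. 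Substantively, your treatment of the strict inequality $F(\mu_{k:m})<\pi^G_{k:m}$ is more careful than the paper's: the paper asserts that $G^{-1}\circ F$, being ``convex and not affine, therefore is strictly convex'', which is false in general (a piecewise-linear convex function is neither affine nor strictly convex), while you instead invoke the equality case of Jensen's inequality --- equality would force $F^{-1}\circ G$ to be affine on the support of $G^{-1}(B_{j:m})$, which is the whole support of $G$, hence $F\in\mathcal{G}$, contradicting the alternative. This actually repairs a loose step in the paper rather than replicating it. Two minor blemishes on your side: $B_{j:m}$ is non-degenerate (indeed it has full support $[0,1]$) for every $m\geq1$, so your restriction to $m\geq2$ is unnecessary and, read literally, would leave the case $m=1$ uncovered; and your closing claim that the ``convex-transform structure forces $F$ to be strictly increasing on the relevant interval'' deserves the same support-based justification as the Jensen step (a flat stretch of $F$ between $\mu_{k:m}$ and $F^{-1}(\pi^G_{k:m})$ must again be ruled out via non-affineness on the support), or can be sidestepped by applying Jensen directly to the convex map $G^{-1}\circ F$ and then composing with $G$, which is strictly increasing on the region where $0<G<1$.
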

If we are dealing with heavy-tailed models, the assumption of Proposition~\ref{consistency} can be restrictive. However, it can be relaxed. If $F$ or $G$ do not have finite means, we can rely on Theorem~\ref{convergence2} and obtain a consistent test just by discarding those values of $j$ such that $\hat{\mu}_{j:m}$ does not converge. Denote with $j_k$, $k=1,\ldots,\ell\leq m$, those values of $j$ such that $\hat{\mu}_{j_k:m}\to_{a.s.}\mu_{j_k:m}$, or, actually, one can choose any arbitrarily small subset of $\{1,\ldots,m\}$ in which we may have convergence of all the L-estimators. This can always be checked for $G$, but not for $F$. However, if $F$ can be strongly heavy-tailed, given $m$, one can properly choose $\ell$ to ensure convergence. The choice of the values $j_k$ depends on the support and the tail parameters, as discussed in Theorem~\ref{convergence2}.

Now, we can define a constrained version of the test statistic, in which the differences $\pi_{j:m}^G-\widetilde{\F}_n(\hat{\mu}_{j:m})$ are computed just at these $\ell$ values, defined by
$$T_{m,\ell,p}^{G+}(\F_n)=\bigg(\sum_{k=1}^\ell(\pi_{j_k:m}^G-\widetilde{\F}_n(\hat{\mu}_{j_k:m}))_+^p\bigg)^{1/p}.$$
The complementary test statistic $T_{m,\ell,p}^{G-}(\F_n)$ can be obtained similarly. We can now obtain critical values, denoted again as $c_{\alpha,n}^{+}$ and $c_{\alpha,n}^{-}$, and follow the same procedures described above for the unconstrained tests. It is easy to check that this new family of tests satisfy all the finite sample size properties of $T_{m,p}^{G+}(\F_n)$ and $T_{m,p}^{G-}(\F_n)$, namely, monotone power and unbiasedness. Moreover, now, even if $F$ and $G$ do not have finite means, we can establish consistency of the tests just by relying on Theorem~\ref{convergence2}. This is summarised in the following proposition, which can be proved using the same arguments of Proposition~\ref{consistency}.
\begin{proposition}\label{consistency3}Assume that $F$ and $G$ satisfy the assumptions of Theorem~\ref{convergence2} for $j=j_1,\ldots,j_\ell$.\begin{enumerate}
\item 	Under $\mathcal{H}_{1+}^{G}$, $P(T_{m,\ell,p}^{G+}(\F_n)\geq c_{\alpha,n}^{+})\to1$.
\item 	Under $\mathcal{H}_{1-}^{G}$, $P(T_{m,\ell,p}^{G-}(\F_n)\geq c_{\alpha,n}^{-})\to1$.
\end{enumerate}

\end{proposition}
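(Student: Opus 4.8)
The plan is to mirror the structure of the proof of Proposition~\ref{consistency}, replacing the full index set $\{1,\ldots,m\}$ by the reduced set $\{j_1,\ldots,j_\ell\}$ and invoking Theorem~\ref{convergence2} in place of Proposition~\ref{convergence} to secure the almost sure convergence of the relevant L-estimators. Concretely, I would first observe that the hypotheses guarantee, for each $k=1,\ldots,\ell$, that $\hat{\mu}_{j_k:m}=\mu_{j_k:m}(\F_n)\to_{a.s.}\mu_{j_k:m}(F)$, since the assumptions of Theorem~\ref{convergence2} are imposed precisely at the indices $j=j_1,\ldots,j_\ell$ (and convergence for $G$ is automatic because $G$ is known and can always be arranged to satisfy the tail conditions at those indices). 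This is the only place where the heavy-tail machinery enters; everything downstream is a continuity-plus-Glivenko--Cantelli argument identical to the finite-mean case.

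Next I would combine this convergence with the uniform convergence $\sup_x|\widetilde{\F}_n(x)-F(x)|\to_{a.s.}0$, which follows from the Glivenko--Cantelli theorem together with the fact that the linear interpolator $\widetilde{\F}_n$ differs from $\F_n$ by at most $1/n$ uniformly. Since $F$ is continuous, a standard argument (continuity of $F$ at the limit point, plus uniform convergence of $\widetilde{\F}_n$) yields $\widetilde{\F}_n(\hat{\mu}_{j_k:m})\to_{a.s.}F(\mu_{j_k:m})$ for each $k$. Under $\mathcal{H}_{1+}^{G}$ we have $F\in\mathcal{F}^{cx}_G-\mathcal{G}$, so Proposition~\ref{prop} gives $F(\mu_{j_k:m})\leq\pi_{j_k:m}^G$ for every $k$, and the exclusion of $\mathcal{G}$ guarantees that the inequality is strict for at least one index $j_{k^*}$ among those retained, so that $(\pi_{j_{k^*}:m}^G-F(\mu_{j_{k^*}:m}))_+>0$. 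Consequently $T_{m,\ell,p}^{G+}(\F_n)$ converges almost surely to a strictly positive constant.

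To close the argument I would show that the critical values $c_{\alpha,n}^{+}$ stay bounded away from this positive limit. The critical value is the $(1-\alpha)$-quantile of $T_{m,\ell,p}^{G+}(\G_n)$; applying the same convergence reasoning to $\G_n$ (now with $H=G$ in Lemma~\ref{lemma}'s notation, i.e. the null distribution), one gets $T_{m,\ell,p}^{G+}(\G_n)\to_{a.s.}0$, because under $G$ each difference $\pi_{j_k:m}^G-G(\mu_{j_k:m}(G))$ equals zero by the definition of $\pi_{j_k:m}^G$. Hence $c_{\alpha,n}^{+}\to 0$, and for $n$ large enough the almost sure limit of $T_{m,\ell,p}^{G+}(\F_n)$ strictly exceeds $c_{\alpha,n}^{+}$, giving $P(T_{m,\ell,p}^{G+}(\F_n)\geq c_{\alpha,n}^{+})\to 1$. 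The case of $\mathcal{H}_{1-}^{G}$ and $T_{m,\ell,p}^{G-}$ is entirely symmetric, reversing the inequality in Proposition~\ref{prop}.

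The main obstacle I anticipate is the step asserting $c_{\alpha,n}^{+}\to 0$: one must argue carefully that the $(1-\alpha)$-quantile of the null statistic vanishes. Almost sure convergence of $T_{m,\ell,p}^{G+}(\G_n)$ to $0$ gives convergence in distribution to the degenerate law at $0$, and since the statistic is nonnegative this forces its $(1-\alpha)$-quantile to $0$; making this quantile convergence rigorous (for instance via the portmanteau theorem, noting that $0$ is a continuity point from the right of the degenerate limit's quantile function) is the delicate part, though it is routine once the degenerate limit is established. Everything else reduces to the continuity of $F$ and $G$ and the convergence supplied by Theorem~\ref{convergence2}, exactly as in Proposition~\ref{consistency}.
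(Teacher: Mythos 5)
Your proposal is correct and follows essentially the same route as the paper, which simply states that Proposition~\ref{consistency3} ``can be proved using the same arguments of Proposition~\ref{consistency}'': you substitute Theorem~\ref{convergence2} for Proposition~\ref{convergence} at the retained indices $j_1,\ldots,j_\ell$, and the remainder (Glivenko--Cantelli plus continuity, strict Jensen under the alternative, and $c_{\alpha,n}^{+}\to 0$ from the degenerate null limit) is identical to the finite-mean case. Your extra care about the quantile convergence of the nonnegative null statistic is a reasonable tightening of a step the paper leaves implicit, but it does not constitute a different approach.
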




\section{Simulations}\label{section5}
In this section, we conduct a numerical investigation across various alternatives, to evaluate the performance of the tests and to assess the impact of the parameters \( m \) and \( p \) on the tests' effectiveness. All the simulations are conducted using \( 5000 \) Monte Carlo trials, and the power of the test is computed using a significance level $\alpha=0.1$. The R code is available at {https://github.com/MohammedEssalih/Project-3}. We focus on the following choices of $G$: exponential (\( G = \mathcal{E} \)), log-logistic (\( G = \mathcal{L} \)), and negative exponential (\( G = \mathcal{E}_{-} \)), namely, we test the IHR/DHR, IOR/DOR, and DRHR/IRHR properties, respectively. For the IOR/DHR and DRHR/IRHR cases, it seems that there are no existing tests available for comparison (\cite{lando2024} introduced a test that has the IOR property as the null hypothesis, but this is a different testing problem). For the IHR case, we compare our tests to the well-known test first introduced by \cite{proschan1967} (abbreviated as P\&P), but also studied, among others, by \cite{bickel} and \cite{gibels}. This comparison is restricted to \( G = \mathcal{E} \), as the P\&P approach is designed exclusively for testing IHR/DHR alternatives. 

We will mainly simulate from the following models: Weibull distribution with shape parameter $a$ and scale parameter $b$, represented as $W(a,b)$; log-logistic distribution with shape parameter $a$ and scale parameter $b$, represented as $\mathcal{L}(a,b)$. It is easy to see that these are monotone in $a$ with respect to the convex ordered, that is, for $a>a'\geq0$ and for every $b,b'>0$, $W(a',b')\leq_c W(a,b)$ and $\mathcal{L}(a',b')\leq_c \mathcal{L}(a,b)$ (the convex transform order is scale-invariant, so hereafter scale parameters will be set to 1).

Simulations are performed for various values of \( m \), across different sample sizes \( n = 25, 50, 100, 200 \). The effect of the choice of $m$ on the power of the tests in some special cases is analysed in detail in the last subsection. In the following analyses, for the IHR/DHR and DRHR/IRHR cases, we take $m=1,5,10,20$. For the IOR case, coherently with the conditions of Theorem~\ref{convergence2}, we use the constrained test statistics $T_{m,\ell,p}^{G+}$ with $\ell=m-2$ ($\ell\leq m-1$ ensures convergence); thus, we use $m=3,5,10,20$. The DOR case is the most critical one since $F$ can be strongly heavy-tailed. Then, we need to choose $m$ and $\ell$ properly, to ensure that the conditions of Theorem~\ref{convergence2} hold, as we will discuss below. 

\subsection{Optimal choice of $p$}
Simulations reveal that the test statistic based on the $L^1$ norm often outperforms those derived from other norms by more swiftly detecting departures from the null hypothesis.  In the case, $ G = \mathcal{E}$, Table~\ref{Tab1} presents the rejection rates for different values of $p$ and $m$, obtained from a $W(1.5,1)$, which is a strictly IHR model. The optimal choice of $p$ may depend on $m$ and $n$, however, especially when the sample size is small, the largest power is often obtained for $p=1.$ For this reason, hereafter we will focus just on the case $p=1$.

	\begin{table}[h]
		\small
		\centering
		\setlength{\tabcolsep}{6pt} 
		\renewcommand{\arraystretch}{1.2} 
		
		\caption{Rejection rates for the IHR case, where $F\sim W(1.5,1)$}
		\label{Tab1}
		\begin{tabular}{cccccc}
			\toprule
			$p$ & $m$ & $n=25$ & $n=50$ & $n=100$ & $n=200$ \\
			\midrule
			\multirow{4}{*}{1} 
			& 1  & 0.3284 & 0.4982 & 0.7210 & 0.9002 \\
			& 5  & 0.3876 & 0.5508 & 0.8238 & 0.9722 \\
			& 10 & 0.3318 & 0.5186 & 0.7908 & 0.9688 \\
			& 20 & 0.2380 & 0.4362 & 0.7124 & 0.9440 \\
			\midrule
			\multirow{4}{*}{2}  
			& 1  & 0.2340 & 0.4688 & 0.7058 & 0.9070 \\
			& 5  & 0.2964 & 0.4676 & 0.7006 & 0.9458 \\
			& 10 & 0.2332 & 0.3512 & 0.6204 & 0.8914 \\
			& 20 & 0.1930 & 0.2912 & 0.4736 & 0.7510 \\
			\midrule
			\multirow{4}{*}{$\infty$}  
			& 1  & 0.3250 & 0.4804 & 0.7174 & 0.9124 \\
			& 5  & 0.2352 & 0.3465 & 0.5164 & 0.7596 \\
			& 10 & 0.1956 & 0.2702 & 0.3544 & 0.5426 \\
			& 20 & 0.1598 & 0.2006 & 0.2494 & 0.3406 \\
			\bottomrule
		\end{tabular}
	\end{table}

\subsection{DRHR and IRHR cases}

The alternatives are generated from the negative Weibull distribution \( X = -Y \), where \( Y \sim W(a, 1) \), which is IRHR for \( a < 1 \) and a DRHR for \( a \geq 1 \). This is an important model in extreme value theory \citep{haan}. We report only the rejection rates for the DRHR alternatives, as both IRHR and DRHR cases exhibit the same pattern. The results, reported in Figure~\ref{IRHR}, show that the power function is monotone with respect to \( a \) and increases for every fixed \( a \) as \( n \) grows, demonstrating the monotonicity and consistency properties of the tests. 

\begin{figure}[h]
	\centering
	\includegraphics[scale=0.8]{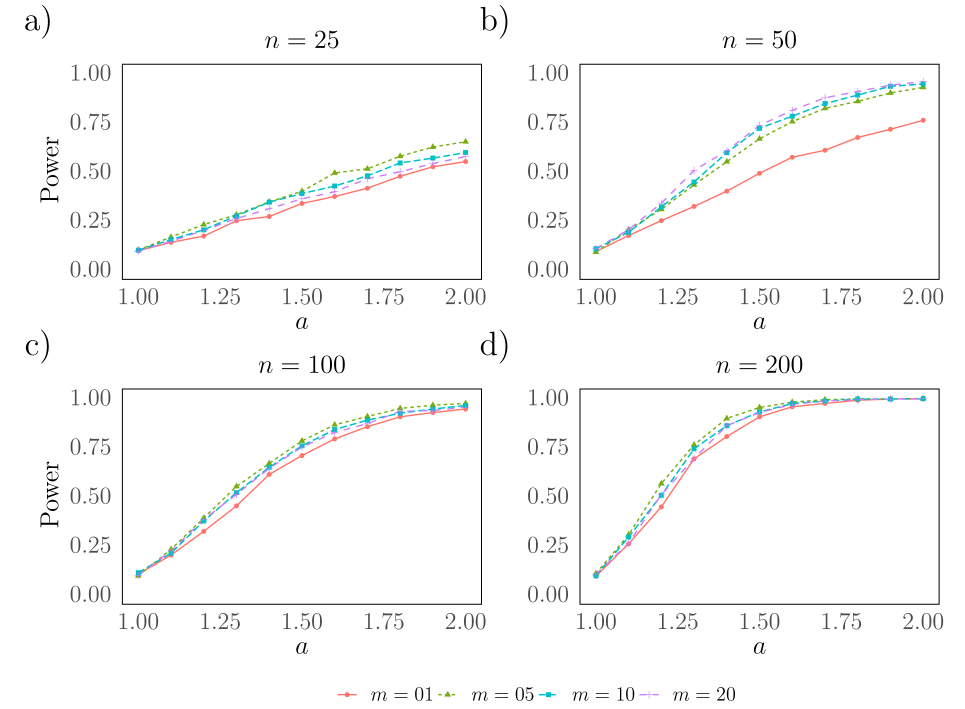}
	\caption{Simulated power of the proposed DRHR tests under the negative \( W(a, 1) \), where \(a\in[1, 2]\).}
	\label{IRHR}
	
	\centering
	\includegraphics[scale=0.8]{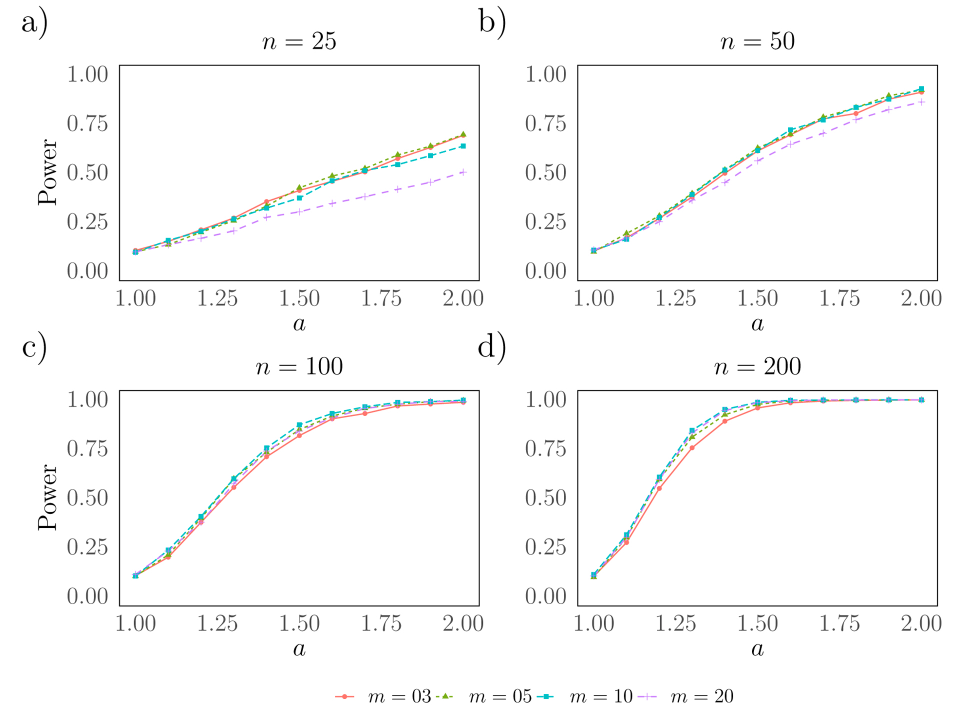}
	\caption{Simulated power of the proposed IOR tests under \( \mathcal{L}(a, 1) \), where \(a\in[1, 2]\).}
	\label{IOR}
\end{figure}

\subsection{IOR and DOR cases}
The IOR and DOR families have important mathematical properties that have garnered significant attention in recent studies. This motivates the interest in nonparametric tests for their verification. For instance, the IOR family has applications in reliability and survival analysis, as it extends the concept of negative ageing to distributions that do not necessarily have all moments \citep{odds}. The DOR family, also referred to as super-Pareto \citep{chen2024}, exhibits the following property: if $X\sim F$ is DOR, a convex combination of an iid sample from $X$ (for example, the sample mean) is stochastically larger than $X$. This result, recently proved by \cite{chen2024}, has implications in the fields of insurance and finance, as it means that, somewhat surprisingly, risk sharing can have a negative effect on participants under a DOR distributional assumption.

We conduct simulations using the log-logistic distribution \( \mathcal{L}(a, 1) \), which is IOR for \( a \geq 1 \) and DOR for $a\leq 1$. For the IOR test, we compute $T_{m,\ell,p}^{G+}$ with $\ell=m-2$, in order to guarantee consistency (any $\ell\leq m-1$ would be enough). The plots in Figure~\ref{IOR} summarise the output of this implementation, highlighting the consistency of the tests. For the DOR case, we let $a\in[0.1,1]$, hence we know that the tail parameter $\alpha$ is greater than or equal to $0.1$. This means that we must choose $j\leq m-10$ to ensure consistency. Hence, we take $m=25$ and $\ell=5$; $m=30$ and $\ell=10$; $m=35$ and $\ell=15$; $m=40$ and $\ell=20$. The results, reported in Figure~\ref{DOR}, show consistency and monotonicity of the tests; the best performance is obtained for $\ell=15$ and $\ell=20$. In general, $\alpha$ is unknown. In this case, a first ``safe" option is to fix a lower bound for $\alpha$, as in the current example, and choose $m$, $\ell$ accordingly. Otherwise, one may estimate $\alpha$, for example using the Hill estimator \citep{haan}.

\subsection{IHR and DHR cases: a comparison with P\&P}

We proceed further by comparing the present tests to that of P\&P, to detect IHR and DHR properties. The test of P\&P is based on the normalised spacings $\overline{D}_{i,n}=(n-i+1)(X_{n-i+1:n}-X_{n-i:n})$. An increasing behaviour of the HR is related to a stochastically decreasing behaviour of the random variables $\overline{D}_{i,n}$. Then, the test statistic is given by the number of times that $\overline{D}_{i,n}\geq \overline{D}_{j,n}$, for $i < j$, that is, $V_n=\sum_{i<j}V_{i,j}$, where $V_{i,j}=\mathbf{1}(\overline{D}_{i,n}-\overline{D}_{j,n}>0)$. The null hypothesis is rejected when the value of $V_n$ is larger than its $(1-\alpha)$ quantile obtained under exponentiality. The test for DHR is easily obtained by reversing signs. 

In the IHR case, we implement both tests and plot the rejection rates for various values of \( m \) across multiple values of \( n \) using the Weibull distribution \( W(a,1) \), for \( a \in [1, 2] \). Note that the case $a=1$ is the exponential, which yields a rejection probability equal to $\alpha$. The obtained results are summarised in Figure~\ref{LandoVsP&P}. The behaviour in the DHR case is similar. The results show that when the distribution is IHR or DHR, the tests of P\&P deliver larger power compared to ours, regardless of $p$ and $m$, and this is especially apparent for small sample sizes. However, our tests have another advantage. Indeed, tests for exponentiality versus monotone HR alternatives can be misleading when the underlying distribution has a non-monotone hazard rate. In such cases, our families of tests, especially with larger values of $m$, can be seen to be overall more robust than the P\&P tests. We simulate from the Student's \( t \)-distribution with 1.1 degrees of freedom, denoted as \( St(1.1) \), which has a bell-shaped HR. The rejection rates for testing the IHR and DHR alternatives are reported in Table~\ref{LandoVsP&P_T}. On the one hand, Table~\ref{LandoVsP&P_T}-(a) shows that the two P\&P tests incorrectly suggest that the $St(1.1)$ distribution is IHR, especially for large sample sizes, without detecting the DHR behaviour. On the other hand, Table~\ref{LandoVsP&P_T}-(b) reveals that our tests lead to rejecting exponentiality in favour of both IHR and DHR alternatives. Since these alternatives are contradictory, one can infer that the distribution being tested is IHR in some interval, but DHR in some other interval, as is the case for $St(1.1)$. Similar results, not reported here, hold simulating from the $St(1)$ (which coincides with the Cauchy, and has no mean), $St(1.5)$ and $St(2)$; and for beta distributions with parameters less than 1, which have a decreasing and then increasing HR. The misidentification highlights a limitation of the P\&P test when assessing distributions with non-monotone hazard rates, while our tests, especially when $m$ is not too small, are overall more reliable in such cases. 

We also computed the test proposed by \cite{mitra2008}, which is based on L-estimators as well (see also \cite{anis} for a generalisation of this approach). This test delivers large power under Weibull-distributed IHR alternatives. However, its applicability is limited only to nonnegative random variables, differently from the tests considered above. Moreover, we noted that it is not location invariant, which is a problem. For instance, if $F(x)=\mathcal{E}(x-1),x>1,$ namely, a shifted exponential, for $n=50$ we obtained a simulated power equal to 1, instead of $\alpha=0.1$, as it should be (the shifted exponential has a constant HR, just like the classic exponential, so we are under $\mathcal{H}_0^\mathcal{E}$ in this case). For these reasons, we did not include this test in the comparison.

\begin{table}[h]
	\centering
	\small
	\setlength{\tabcolsep}{6pt}  
	\renewcommand{\arraystretch}{1.2}  
	
	\caption{Rejection rates for IHR and DHR tests, where the latter are shown in parentheses, for the $St(1.1)$.}
	\label{LandoVsP&P_T}
	
	\begin{subtable}{\textwidth}
		\centering
		\caption{P\&P test}
		\begin{tabular}{ccccc}
			\toprule
			$n=25$ & $n=50$ & $n=100$ & $n=200$ & $n=500$ \\
			\midrule
			0.49 (0.01) & 0.88 (0) & 0.98 (0) & 0.99 (0) & 1 (0) \\
			\bottomrule
		\end{tabular}
	\end{subtable}
	
	\vspace{0.5cm} 
	
	\begin{subtable}{\textwidth}
		\centering
		\caption{Proposed tests}
		\begin{tabular}{c ccccc}
			\toprule
			$m$ & $n=25$ & $n=50$ & $n=100$ & $n=200$ & $n=500$ \\
			\midrule
			1  & 0.57 (0.22) & 0.60 (0.24) & 0.62 (0.27) & 0.65 (0.26) & 0.68 (0.27) \\
			5  & 0.93 (0.15) & 0.99 (0.31) & 1 (0.58)    & 1 (0.88)    & 1 (1) \\
			10 & 0.92 (0.09) & 0.99 (0.16) & 1 (0.37)    & 1 (0.74)    & 1 (0.99) \\
			20 & 0.85 (0.04) & 0.99 (0.06) & 1 (0.13)    & 1 (0.38)     & 1 (0.94) \\
			\bottomrule
		\end{tabular}
		\label{LandoVsP&P_T}
	\end{subtable}
\end{table}

\begin{figure}[h]
	\centering
	\includegraphics[scale=0.8]{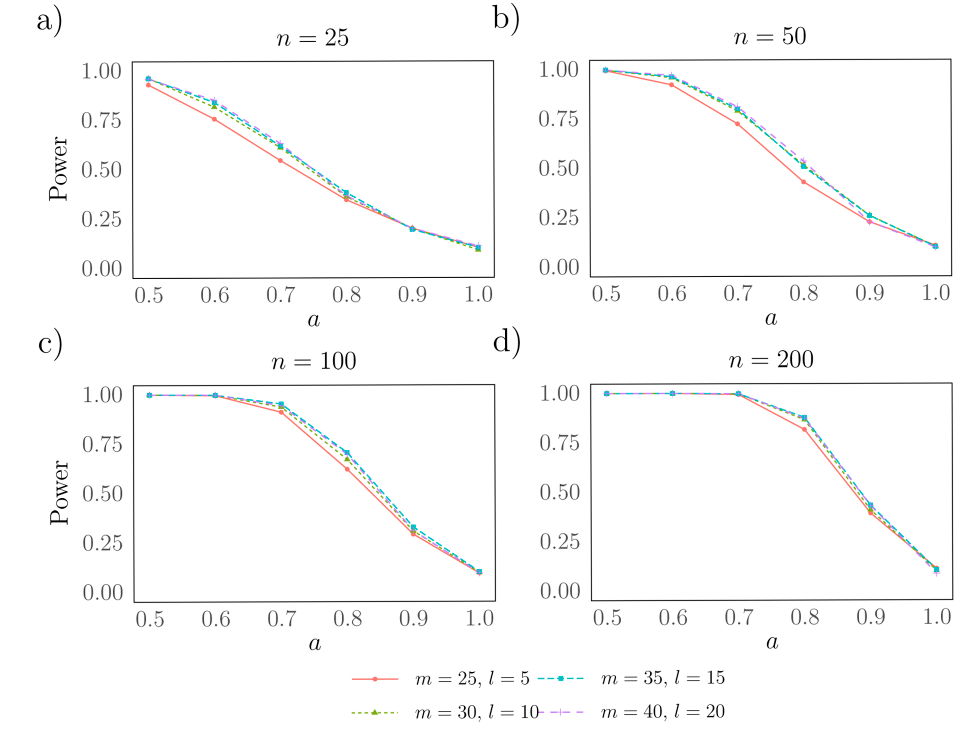}
	\caption{Simulated power of the proposed DOR tests under \( \mathcal{L}(a, 1) \), where \(a\in[0.1, 1]\). The power is always 1 for $a<0.5$, so we excluded these values from the range of the plot.}
	\label{DOR}
	\centering
	\includegraphics[scale=0.8]{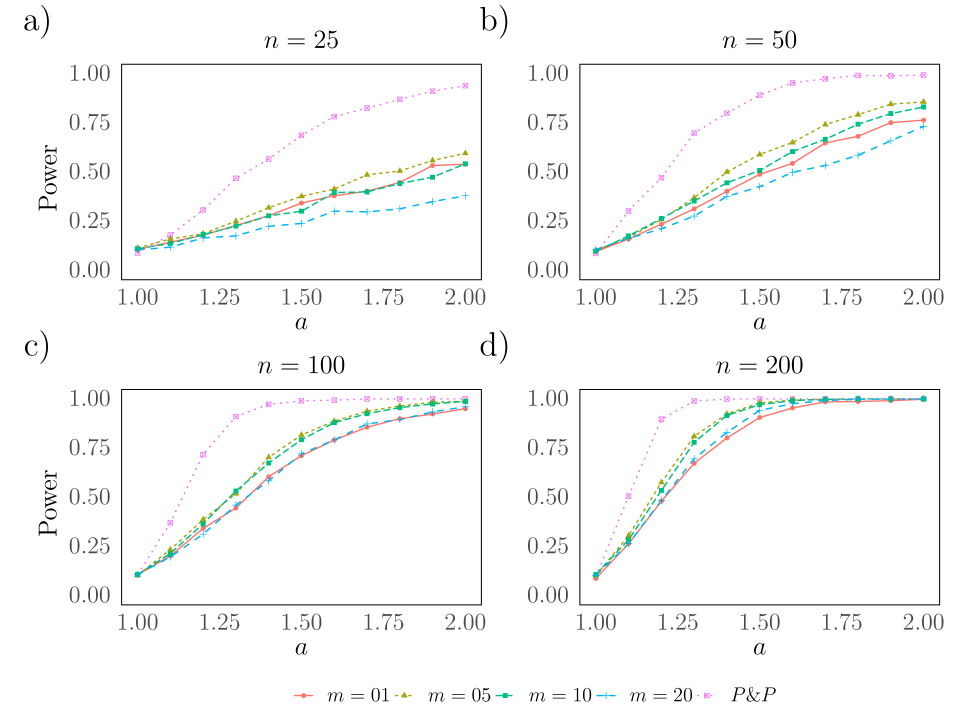}
	\caption{Simulated power of the proposed IHR tests compared to that of P\&P under \( W(a, 1) \), where \(a\in[1, 2]\).}
	\label{LandoVsP&P}
\end{figure}

\subsection{Optimal value of $m$}

This subsection aims to identify the optimal values of \( m \), which seem to affect the performance of the tests. Overall, too small values of $m$ can be risky, for example, they could lead more often to the wrong decision when the underlying distribution has a non-monotone generalized HR, as it has been shown earlier for $G=\mathcal{E}$. Therefore, we suggest larger values on $m$, for example, $m\geq 5$. Moreover, when $G$ is heavy-tailed, we need larger values of $m$ to ensure consistency. The optimal value of \( m \) also depends on the sample size. 

To analyse this relationship, we define the DRHR distribution as the negative $W(1.5, 1)$, the IOR as $\mathcal{L}(1.5, 1)$, and the IHR distribution as $W(1.5, 1)$. A 3D plot, represented in Figure~\ref{captures}, illustrates the rejection rates for various combinations of $n$ and $m$.

For all the tests, small values of $m$ result in poor test performance. For the IOR and IHR tests, the optimal $m$ is observed to lie within the interval corresponding to $10\%$ to $20\%$ of $n$. For each $n$, the power increases with $m$, peaks at a certain point, and then decreases as $m$ continues to grow. In contrast, the DRHR test demonstrates a different pattern: the power initially rises with $m$ but then stabilises and remains constant for larger values of $m$. Finally, the influence of $m$ on the power is more pronounced for smaller values of $n$. Indeed, as $n$ increases, by consistency, the power approaches $1$ regardless of $m$.

\begin{figure}[ht]
	\centering
	\includegraphics[scale=0.35]{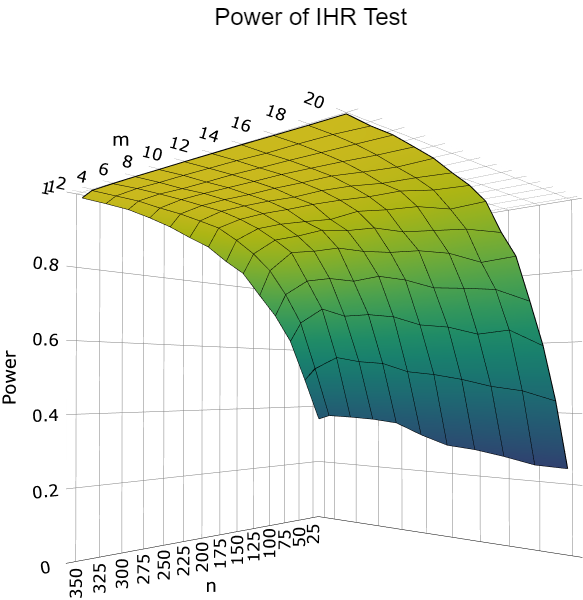}
	\includegraphics[scale=0.35]{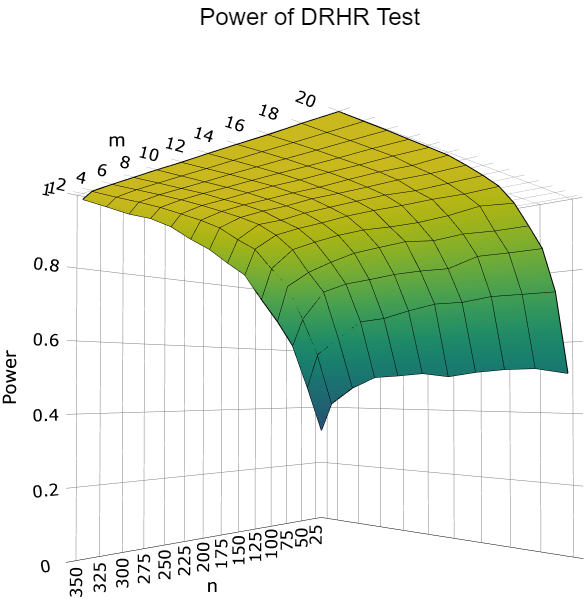}
	\includegraphics[scale=0.35]{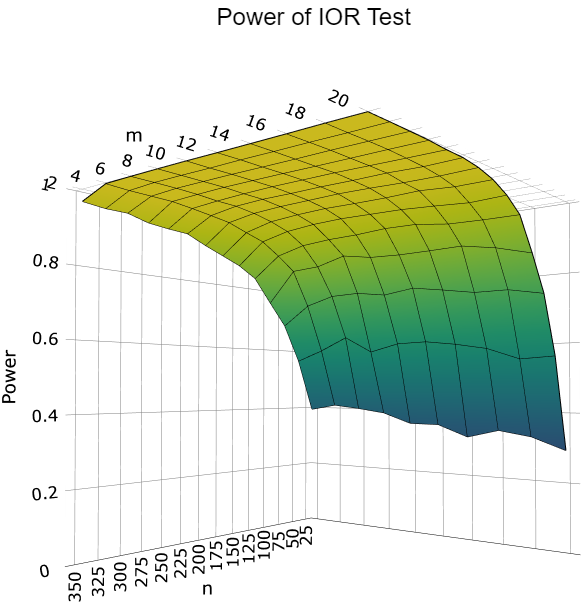}
	\caption{Simulated power of the proposed tests evaluated for different combinations of $n$ and $m$.}\label{captures}
\end{figure}

\section{A real data example}\label{section6}

In the following example, we demonstrate how our family of tests can be applied iteratively to identify the shape properties of a distribution by analysing a sample from it. As we will discuss, this approach allows testing multiple hypotheses to uncover the key property of the distribution of interest, ultimately aiding in the selection of the appropriate model to fit the data.

We apply our method to a dataset in \cite{bryson}, which reports annual flows of the Weldon River at Mill Grove, Missouri, during the years 1930-59 ($n$=26). One can wonder whether the underlying distribution is heavy-tailed, which is very important to predict the risk of extreme events, as these can cause floods. \cite{bryson} claims that the data indicates a gamma distribution. By fitting this distribution to the data, we estimate a shape parameter of $0.64$ and a scale parameter of $691$ via maximum likelihood. A Kolmogorov-Smirnov test suggests that this hypothesis could be true, with a $p$-value of 0.58. However, this distribution has all moments finite, so it is definitely not heavy-tailed. This model might be unable to predict the risk of extreme events. We then apply our tests to better understand the situation. By applying the tests for IHR/DHR, we find that the distribution is likely to be DHR. Using $m=5$ and $p=1$, the null hypothesis of exponentiality versus the strict DHR alternative is rejected with a $p$-value 0.01. This confirms the finding of \cite{bryson}, as the gamma distribution is strictly DHR, when the shape parameter is less than 1. Now, we apply our constrained tests for IOR/DOR, with $m=5$ and $\ell=3$. Both tests do not reject the null hypothesis $\mathcal{H}_0^{\mathcal{L}},$ in favour of strict IOR and DOR alternatives, with $p$-values 0.3 and 0.5, respectively. This indicates that the distribution could have a constant odds rate, namely, $F$ may be a log-logistic of the form $F(x)=\mathcal{L}(x/s),$ for some scale parameter $s>0$. A Kolmogorov-Smirnov test confirms this suggestion, with a $p$-value 0.84. More generally, one can fit the data using a log-logistic model of the form $F(x)=\mathcal{L}(a,s)(x)=\mathcal{L}((x/s)^a),$ where the shape parameter $a$ is allowed to vary. In this case, the maximum likelihood estimates are $\hat{a}=1.13$ and $\hat{s}=185$. The Kolmogorov-Smirnov test supports this finding, yielding a larger $p$-value of 0.94. This distribution is heavy-tailed, which makes a remarkable difference compared to the gamma distribution. In particular, the estimated log-logistic distribution has mean 1412 and infinite variance (precisely, all moments of order $\geq1.13$ are not finite), whereas the estimated gamma has mean 442, which coincides with the mean of the sample, and finite variance. An alternative option consists in fitting the data using the IOR shape-constrained estimator of \cite{lando2024}. In this case, the Kologorov-Smirnov test yields a $p$-value of 0.98. In this example, our tests prove useful in detecting the tail behaviour of the underlying distribution, indicating a heavy-tailed distribution, which is more suitable to model the risk of extreme events.

\begin{appendix}
	\section{Proofs}
\label{app_proofs}
\begin{proof}[Proof of Proposition~\ref{increasing}]The following results hold for every realisation $F_n$ of $\F_n$, that is, for every outcome in the sample space. Hence, we obtain sure relations.
\begin{enumerate}
\item 	Denote with $X^*$ a random variable with CDF $F_n$. By linearity, $$ \frac1m\sum_{j=1}^m \E X^*_{j:m}=\E\big(\frac1m\sum_{j=1}^m X^*_{j:m}\big)=\E X^*.$$
Now, the conclusion follows from noticing that $\E X^*_{j:m}={\mu}_{j:m}(F_n)$ and $\E X^*=\sum_{i=1}^nx_i/n.$ 
\item By definition of order statistics, $X^*_{j:m}\leq X^*_{j+1:m}$, then, taking expectations, ${\mu}_{j:m}(F_n)\leq{\mu}_{j+1:m}(F_n)$. 
\item It's easy to check that $B_{j:m+1}\leq_{st} B_{j:m}$, hence, for every realisation $F_n$, $F_{B_{j:m+1}}\circ F_n(x)\geq F_{B_{j:m}}\circ F_n(x)$, for every $x$, in other words, $F_{B_{j:m+1}}\circ F_n\leq_{st}F_{B_{j:m}}\circ F_n$. Since the mean is isotonic with the usual stochastic order, ${\mu}_{j:m+1}(F_n)\leq{\mu}_{j:m}(F_n)$.
\end{enumerate}
\end{proof}

The proof of Theorem~\ref{convergence2} is based on the following argument.	The Glivenko-Cantelli theorem and the uniform continuity of $F_{B_{j:m}}$ in $[0,1]$ imply $\sup_{[0,1]}|F_{B_{j:m}}\circ \F_n- F_{B_{j:m}}\circ F|\to_{a.s.}0.$ Since the (random) CDF $F_{B_{j:m}}\circ \F_n$ converges also pointwise a.s. to the CDF $F_{B_{j:m}}\circ F$, we have a convergence in distribution which holds with probability 1. Theorem 3.5 in \cite{billingsley} establishes that convergence in distribution implies convergence in expectation, under uniform integrability of the converging sequence. This means that, if $Z_n\to_d Z$ and the sequence $Z_n$ is uniformly integrable, that is, if $\lim_{M\to\infty}\sup_n\int_{|Z_n|>M}|Z_n|dP\to0,$ then $\E Z_n\to \E Z.$ Hence, in our random setting, a stochastic version of this condition may ensure that the expectation functional preserves the convergence, that is, $\mu(F_{B_{j:m}}\circ\F_n)\to_{a.s.}\mu(F_{B_{j:m}}\circ F).$ In particular, we will require that $\lim_{r\to0}\limsup_{n\to\infty}\big(\int_{0}^r+\int_{1-r}^1\big)|\F_n^{-1}(p)|dV(p)=_{a.s.}0.$ To verify this condition, we leverage the results of \cite{mason}. 

\begin{proof}[Proof of Theorem~\ref{convergence2}]
To simplify the notations, let $V=F_{B_{j:m}}$ and $v=V'=f_{B_{j:m}}$. Note that $\sup_{p\in(0,1)}v(p)=B\leq m$. We focus on case 3, which is the most general one, hence we assume that $F\in \mathcal{D}^+(\Phi_\alpha)\cap\mathcal{D}^-(\Phi_\beta),$ with $\alpha\leq1$ and $\beta\leq1$, since $\E|X|=\infty.$ Indeed, if $\E(X_-)<\infty$ ($\E(X_+)<\infty$), we can apply Proposition~\ref{convergence} to $X_-$ ($X_+$), and focus only on $X_+$ ($X_-$).
	
{	Consider the following decomposition:
	\begin{equation}\label{deco}
		\mu(V\circ\F_n)= \int_0^{r}\F_n^{-1}(p)dV(p)+\int_r^{1-r}\F_n^{-1}(p)dV(p)+\int_{1-r}^1\F_n^{-1}(p)dV(p).
	\end{equation}
	 We first show that the second term of equation~(\ref{deco}) converges a.s. to $\int_r^{1-r}F^{-1}(p)dV(p)$, as $n\to\infty$.
	For $r>0$, $F^{-1}$ is bounded in $[r,1-r]$ and $\F_{n}^{-1}$ converges a.s. and uniformly to $F^{-1}$ in $[r,1-r]$. Then,  $$\int_r^{1-r}|{\F}_{n}^{-1}(p)-{F}^{-1}(p)|dp\leq(1-2r) \sup_{p\in[r,1-r]}|{\F}_{n}^{-1}(p)-{F}^{-1}(p)|	\to_{a.s.}0.$$
H\"{o}lder's inequality gives
	\begin{eqnarray*}
		\bigg\lvert\int_r^{1-r}\F_n^{-1}(p)dV(p)-\int_r^{1-r}F^{-1}(p)dV(p)\bigg\rvert\leq	\int_r^{1-r}|\F_n^{-1}(p)-F^{-1}(p)|dV(p)\\
		\leq \bigg(\sup_{p\in[r,1-r)}v(p)\bigg)\int_r^{1-r}|\F_n^{-1}(p)-F^{-1}(p)|dp\\
		\leq B\int_r^{1-r}|{\F}_{n}^{-1}(p)-{F}^{-1}(p)|dp	\to_{a.s.}0.
	\end{eqnarray*}
	Hence, $ \int_r^{1-r}\F_n^{-1}(p)dV(p) \to_{a.s.}\int_r^{1-r}F^{-1}(p)dV(p)$.}
	
	{Now, we need to show that the two tail blocks in equation~(\ref{deco}) can be a.s. negligible if $r\to0.$ We first focus on the right-tail integral, $\int_{1-r}^1\F_n^{-1}(p)dV(p)$. Recall that $$v(p)=\frac1Mp^{j-1}(1-p)^{m-j}, \quad p\in[0,1],$$ where $M=1/B(j,m-j+1)$ and $B$ denotes the beta function. By assumption, $1/\alpha<m-j+1,$ where $\alpha\leq1$, so that $m>j$. Note that, as $m> j\geq1$, $(1-p)^{m-j}$ is strictly decreasing and $p^{j-1}$ is increasing in $[0,1],$ therefore, the product $v(p)$ is monotone decreasing for $p$ close to 1, say $p>1-\lambda$, for some $\lambda\in(0,1-(j-1)/(m-1))$, where $(j-1)/(m-1)$ is the mode of $v$. Hence, for every $i>(1-\lambda) n$, we have the following bounds
		$$\int_{(i-1)/n}^{i/n} v(p)dp=M\int_{(i-1)/n}^{i/n} p^{j-1}(1-p)^{m-j}dp\leq M \frac1n\bigg(1-\frac {i-1}n\bigg)^{m-j}. $$
		This implies that, for $r<\lambda,$ \begin{align*}\lim_{r\to0}\limsup_{n\to\infty}\bigg\vert\int_{1-r}^1\F_n^{-1}(p)dV(p)\bigg\vert\\
			=\lim_{r\to0}\limsup_{n\to\infty}\bigg\vert\sum_{i=n-[nr]+1}^{n}X_{i:n}\int_{(i-1)/n}^{i/n} v(p)dp\bigg\vert\\
		\leq M\lim_{r\to0}\limsup_{n\to\infty}\bigg\vert\sum_{i=n-[nr]+1}^{n}X_{i:n}\frac1n\bigg(1-\frac {i-1}n\bigg)^{m-j} \bigg\vert\\
		=M\lim_{r\to0}\limsup_{n\to\infty}\bigg\vert\sum_{k=1}^{[nr]}X_{n-k+1:n}n^{-1-(m-j)}k^{m-j} \bigg\vert.\end{align*}	
	 Now, let $$h(p)=(F^{-1}(p))_+.$$ 
	Theorem 2 of \cite{mason} implies that $$\lim_{n\to\infty}\sum_{k=1}^{n}X_{n-k+1:n}n^{-1-(m-j)}k^{m-j}=_{a.s.}\int_0^1(1-p)^{m-j}h(p)dp,$$ if and only if 
	$$E(h,j,m)=\int_0^1 h^{\frac{1}{m-j+1}}(p)dp<\infty, $$  and $$Q(h,j,m)=\int_0^1(1-p)^{m-j}h(p)dp<\infty. $$
	 The condition $E(h,j,m)<\infty$ is verified for $1/\alpha<m-j+1$ because in this case $F$ has all moments of order $\tau\in(0,\alpha)$ \citep[p. 9]{haan}, therefore $(F^{-1})^\tau_+$ is surely integrable. With regard to $Q(h,j,m)$, note that, if $F\in \mathcal{D}^+(\Phi_\alpha)$, then we can write
	 $1-F(x)=x^{-\alpha}L(x) $, where $L$ is slowly varying at infinity \citep[p. 15]{resnick}. It is well-known that, in this case, the tail-quantile function $U(t)=F^{-1}(1-1/t)$ is regularly varying, for $t\to\infty,$ with tail index $1/\alpha$ \citep[Theorem 1.5.12]{bingham}. This means that we can write $U(t)=t^{1/\alpha}\widetilde{L}(t)$, where $\widetilde{L}$ is slowly-varying, and $$F^{-1}(p)=(1-p)^{-1/\alpha}\widetilde{L}(1/(1-p)).$$ Now, the integral $Q(h,j,m)$ converges if and only if, for some $\gamma\in(0,1)$,
		$$ \int_{1-\gamma}^1(1-p)^{-1/\alpha+m-j}\widetilde{L}\big(\tfrac1{1-p}\big)dp=
		\int_{1/\gamma}^\infty x^{-1/\alpha+m-j-2}\widetilde{L}(x)dx<\infty , $$
	which holds again for $1/\alpha<m-j+1$ \citep[Proposition 1.5.10]{bingham}. We conlude that $\int_{1-r}^1\F_n^{-1}(p)dV(p)$ tends a.s. to a finite quantity, therefore, by letting $r\to0,$ the third term in equation~(\ref{deco}) vanishes with probability 1.}
	
	{By similar reasoning, for sufficiently small $r,$ \begin{align*}\lim_{r\to0}\limsup_{n\to\infty}\bigg\vert\int_{0}^r\F_n^{-1}(p)dV(p)\bigg\vert\\
			\leq M\lim_{r\to0}\limsup_{n\to\infty}\sum_{k=1}^{[nr]}\vert X_{k:n}\vert k^{j - 1} n^{-j} .\end{align*} 
			By Theorem 1 of \cite{mason}, and using symmetric arguments, one can show that, for $r\to0,$ the first integral in equation~(\ref{deco}) converges to 0 a.s. whenever $j>1/\beta.$ Finally, for every $\epsilon>0$, there exist some $r_\epsilon$ such that, for $r<r_\epsilon$, there exists an index $N=N(r_\epsilon)$ such that, for all $n>N$, we have a.s.,
			\begin{eqnarray*}
				|\mu_{j:m}(\F_n)-\mu_{j:m}(F)|\\
				\leq \big(\int_{0}^r+\int_{1-r}^1\big)|\F_n^{-1}(p)|dV(p)+\int_r^{1-r}|\F_n^{-1}(p)-F^{-1}(p)|dV(p)<\epsilon,
			\end{eqnarray*}concluding the proof.}
\end{proof}

\begin{proof}[Proof of Lemma~\ref{lemma}]
Let $H_n$ be a realisation of $\mathbb{H}_n$, with jumps at points $y_1,...,y_n$. Given $H_n$, let $F_n= H_n\circ H^{-1}\circ F.$ $F_n$ is the empirical CDF with jumps at points $x_i=F^{-1}\circ H(y_i),$ $i=1,...,n,$ so it may be seen as a realisation of $\F_n$. Now, $$H_n^{-1}\circ F_n=I_n\circ H^{-1}\circ F,$$
where $I_n$ is a step function such that $I_n(y_i)=y_i$, $i=1,...,n.$ Therefore the convex function $H^{-1}\circ F$ interpolates the jump points of $H_n^{-1}\circ F_n$. Since $\widetilde{H}_n^{-1}\circ \widetilde{F}_n$ and $H_n^{-1}\circ F_n$ coincide at points $x_i$, we conclude that $\widetilde{H}_n^{-1}\circ \widetilde{F}_n$ is a convex function, interpolating $H_n^{-1}\circ F_n$. Now, let $X^*$ and $Y^*$ be the RVs with CDFs ${F}_n$ and ${H}_n$. Jensen's inequality implies that

$$\E X^*_{j:m}=\int xdF_{B_{j:m}}\circ F_n(x) \leq \widetilde{F}_n^{-1}\circ \widetilde{H}_n\bigg(\int \widetilde{H}_n^{-1}\circ \widetilde{F}_n(x)dF_{B_{j:m}}\circ F_n(x) \bigg).$$
By a change of variables, taking into account that $F_n\circ\widetilde{F}_n^{-1}\circ \widetilde{H}_n=H_n,$
$$\int \widetilde{H}_n^{-1}\circ \widetilde{F}_n(x)dF_{B_{j:m}}\circ F_n(x)=\int xdF_{B_{j:m}}\circ H_n(x)=\E Y^*_{j:m},$$
which implies that
$$\widetilde{F}_n\bigg(\int xdF_{B_{j:m}}\circ F_n(x)\bigg) \leq \widetilde{H}_n\bigg(\int xdF_{B_{j:m}}\circ H_n(x) \bigg).$$
This inequality holds for every possible realization $H_n$ of $\mathbb{H}_n$. Therefore, Theorem 1.A.1 of \cite{shaked} yields that 
$\widetilde{\F}_n({\mu}_{j:m}(\F_n)) \leq_{st} \widetilde{\mathbb{H}}_n({\mu}_{j:m}(\mathbb{H}_n)).$
\end{proof}

\begin{proof}[Proof of Theorem~\ref{size}]
We prove just part 1. Given random variables $U_j$ and $Z_j$, $j=1,\ldots,m$, such that $U_j\leq_{st}Z_j$ for every $j=1,\ldots,m,$ Theorem 1.A.3 of \cite{shaked} establishes that $\psi(U_1,\ldots,U_m)\leq_{st}\psi(Z_1,\ldots,Z_m),$
for every increasing function $\psi:\R^m\to\R$. Given some vector $(z_1,\ldots,z_m),$ the function $$\widetilde{\psi}_p(|z_1|,\ldots,|z_m|)=||(z_1,\ldots,z_m)||_p=\big(\sum_{j=1}^m |z_j|^p\big)^{1/p},\quad p\geq1,$$
is increasing. Now, let		$$\widetilde{Z}_j=\big(\pi_j^G-\widetilde{\F}_n(\mu_{j:m}(\F_n))\big)_+,$$ and $$
\widetilde{U}_j=\big(\pi_j^G-\widetilde{\mathbb{H}}_n(\mu_{j:m}(\mathbb{H}_n))\big)_+.$$	By reversing signs, Lemma~\ref{lemma} implies that $\widetilde{U}_j\leq_{st}\widetilde{Z}_j$ for every $j=1,\ldots,m.$ Now, Theorem 1.A.3 of \cite{shaked} yields
$$T_{m,p}^{G+}(\mathbb{H}_n)=\widetilde{\psi}_p(|\widetilde{U}_1|,\ldots,|\widetilde{U}_m|)\leq_{st}\widetilde{\psi}_p(|\widetilde{Z}_1|,\ldots,|\widetilde{Z}_m|)=T_{m,p}^{G+}(\F_n). $$
Therefore, by definition of the usual stochastic order,		
$$P(T_{m,p}^{G+}(\F_n)\geq t)\geq P(T_{m,p}^{G+}(\mathbb{H}_n)\geq t),$$ for every $t$. Taking $t=c_{\alpha,n}^{+}$, the proof is concluded.
\end{proof}

\begin{proof}[Proof of Proposition~\ref{consistency}]
Again, we just prove the first assertion. 
\begin{align*}|\widetilde{\G}_n({\mu}_{j:m}(\G_n))-G({\mu}_{j:m}(G))|\leq\\
	|\widetilde{\G}_n({\mu}_{j:m}(\G_n))-\widetilde{\G}_n({\mu}_{j:m}(G))|+|\widetilde{\G}_n({\mu}_{j:m}(G))-G({\mu}_{j:m}(G))|.\end{align*}
Noticing that $\sup_x|\widetilde{\G}_n(x)-\G_n(x)|\leq 1/n\to0$, the two terms converge a.s. to 0 by Proposition~\ref{convergence} and by the Glivenko-Cantelli theorem, respectively. Therefore, for every $j=1,...,m,$
$\widetilde{\G}_n({\mu}_{j:m}(\G_n))\to_{a.s.} G(\mu_{j:m}(G)).$ 
Accordingly, if $\mathcal{H}^G_0$ is true, the continuity of the $L^p$ norm and the continuous mapping theorem imply
$$T_{m,p}^{G+}(\G_n)=\bigg(\sum_{k=1}^m(\pi_{k:m}-\widetilde{\G}_n(\hat{\mu}_{k:m}))_+^p\bigg)^{1/p}\to_{a.s.}T_{m,p}^{G+}(G)=0.$$
Then, the $(1-\alpha)$-quantile of $T_{m,p}^{G+}(\G_n)$, $c^+_{\alpha,n}$, also tends to 0. Under $\mathcal{H}^{G+}_1$, $G^{-1}\circ F $ is convex and not affine, therefore it is strictly convex, and Jensen's inequality holds strictly. Then, for $j\in\{1,\ldots,m\}$, $\widetilde{\F}_n(\hat{\mu}_{j:m})\to_{a.s.}F({\mu}_{j:m})< \pi_{j:m}^G$. By continuity of the $L^p$ norm, the continuous mapping theorem implies \begin{align*}T_{m,p}^{G^+}(\F_n)=\bigg(\sum_{k=1}^m(\pi_{k:m}^G-\widetilde{\F}_n(\hat{\mu}_{k:m}))_+^p\bigg)^{1/p}\to_{a.s.}\\\bigg(\sum_{k=1}^m(\pi_{k:m}^G-{F}({\mu}_{k:m}))_+^p\bigg)^{1/p}=T_{m,p}^{G^+}(F)=d,\end{align*} where $d$ is some positive number. Given some $0<\epsilon<d,$ there exists some $n_0$ such that, for $n>n_0,$ $|T_{m,p}^{G^+}(\F_n)-T_{m,p}^{G^+}(F)|<\epsilon $, a.s., so that
$	T_{m,p}^{G^+}(\F_n)>d-\epsilon$ with probability 1. However, as $c_{\alpha,n}^+\to0$ and $\epsilon$ is arbitrarily small, it follows that $\lim_{n\to\infty}P(	T_{m,p}^{G^+}(\F_n)\geq c_{\alpha,n}^+)= 1$.


\end{proof}

\end{appendix}
\bibliographystyle{elsarticle-harv}
\bibliography{biblio}

\end{document}